\numberwithin{equation}{section}
\newtheorem{thm}{Theorem}[section]
\newtheorem{conj}[thm]{Conjecture}
\newtheorem*{claim}{Claim}
\newtheorem*{definition}{Definition}
\newcommand{\pd}{\partial}
\begin{document}

\title[Schwarz potential]{Laplacian Growth, Elliptic Growth, and singularities of the Schwarz potential}

\author[Erik Lundberg]{Erik Lundberg}
%\address{Department of Mathematics and Statistics,
%University of South Florida, 4202 E. Fowler Ave., PHY114, Tampa FL
%33617} \email{elundber@mail.usf.edu}

%\subjclass[2000]{Primary: 37E15, Secondary: 26A18}
%\date{2007}

\begin{abstract}
The Schwarz function has played an elegant role in understanding and in generating new examples of exact solutions to the Laplacian growth (or ``Hele-Shaw``) problem in the plane. 
The guiding principle in this connection is the fact that ``non-physical'' singularities in the ``oil domain'' of the Schwarz function are stationary, 
and the ``physical'' singularities obey simple dynamics.  
We give an elementary proof that the same holds in any number of dimensions for the Schwarz potential, introduced by D. Khavinson and H. S. Shapiro \cite{KhSh} (1989).
A generalization is also given for the so-called ``elliptic growth'' problem by defining a generalized Schwarz potential.

New exact solutions are constructed, and we solve inverse problems of describing the driving singularities of a given flow.
We demonstrate, by example, how $\mathbb{C}^n$-techniques can be used to locate the singularity set of the Schwarz potential.
One of our methods is to prolong available local extension theorems by constructing ``globalizing families''.
We make three conjectures in potential theory relating to our investigation.
\end{abstract}

\maketitle

\section{Introduction}

A one-parameter family of decreasing domains, $\{ \Omega_t \}$, in $\mathbb{R}^n$ solves the Laplacian growth problem with sink at ${\bf x}_0 \in \Omega_t$ if the normal velocity, 
$v_n$ of the boundary $\Gamma_t := \partial \Omega_t$ is determined by a harmonic Green's function, $P({\bf x},t)$, of $\Omega_t$ as follows.

\begin{equation}\label{LG}
\left\{
\begin{array}{l}
v_n|_{\Gamma_t}=-\nabla P \\
\Delta P = 0 $, in $ \Omega_t \\
P|_{\Gamma_t} = 0 \\
P({\bf x } \rightarrow {\bf x_0},t) \sim - Q \cdot K({\bf x}-{\bf x_0}),
\end{array}\right.
\end{equation}

where $K$ is the fundamental solution of the Laplace equation and $Q>0$ determines the suction rate at ${\bf x}_0$.  
We can also consider $Q<0$ for the case of a source ${\bf x}_0$ where injection occurs, 
but this problem is stable (approaching a sphere in the limit) and is sometimes called the ``backward-time Laplacian growth''.

This is a \emph{nonlinear} moving boundary problem,
ubiquitous as an ideal model (or at least, first approximation) of many growth processes in nature and industry.
We stress that we are considering here the ill-posed zero surface-tension case, where the interface can encounter a cusp.
The zero surface-tension case has attracted wide and growing attention mainly for two reasons (to be brief):
(i) it has direct connections to many other areas such as classical potential theory, integrable systems, soliton theory, and random matrices
(ii) it admits a miraculous complete set of explicit exact solutions in the two-dimensional case.

If the domains $\Omega_t$ are bounded, with $Q>0$ problem (\ref{LG}) actually produces a \emph{shrinking} boundary.  
We get a growth process if $\Omega_t$ contains infinity so $P$ then solves an \emph{exterior} Dirichlet problem.  
In such a situation, it is common to place the sink at infinity by prescribing asymptotics for $\nabla P$ 
so that the flux across neighborhoods of infinity is proportional to $Q$.  
In the two-dimensional case this can be realized in the laboratory using a \emph{Hele-Shaw cell}.  
Two sheets of glass are placed close together with a viscous fluid (``oil'') filling the void between them.  
A small hole is drilled in the center of the top sheet and an inviscid fluid (``water'') is pumped in at a constant rate.  
Then problem (\ref{LG}) serves as an ideal model for the boundary of the growing bubble of water.  
The harmonic function $P({\bf x},t)$, in this case, corresponds to the \emph{pressure} in the oil domain.  
In other physical settings modeled by (\ref{LG}), $P({\bf x},t)$ can be a probability, a concentration, an electrostatic field, or a temperature.
Because of the huge amount of literature, we are limited to citing an incomplete list of papers.
For a list of over 500 references, see \cite{GilHow}.

We are particularly attracted to this problem by the lack of explicit examples in dimensions higher than two.
The existence, uniqueness, and regularity theory are well-developed in arbitrary dimensions,
and in the plane there is an abundance of explicit, exact solutions.
In dimensions higher than two, the only examples are a shrinking sphere (in the case when the ``oil domain'' $\Omega_t$ is bounded) 
or the exterior of a homothetically growing ellipsoid (in the case $\Omega_t$ is unbounded).
The obvious explanation for this deficiency of explicit examples is a lack of conformal maps in higher dimensions (Liouville's Theorem) since exact 
solutions are usually described in terms of a time-dependent conformal map of the domain to the disk.  
However, exact solutions can be understood using a different tool from complex analysis, the Schwarz function (see \cite{Davis} and Section 2 below).  
The following theorem relates to the work of S. Richardson \cite{Rich} and was first stated in terms of the Schwarz function by R. F. Millar \cite{Mil}.
Also, the discussion given by S. Howison \cite{How} seems to have played an important role in popularizing the use of the Schwarz function in studies of Laplacian growth.
\begin{thm}[Dynamics of Singularities: $\mathbb{R}^2$]\label{SF}
Suppose a one-parameter family of domains $\Omega_t$ has smoothly-changing analytic boundary with Schwarz function $S(z,t)$.
Then it is a Laplacian growth if and only if
\begin{equation}\label{DSP}
\frac{\partial}{\partial t}S(z,t) =-4\frac{\partial}{\partial z}P(z,t)
\end{equation}
\end{thm}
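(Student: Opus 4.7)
The plan is to exploit the defining identity $S(z,t)=\bar z$ on $\Gamma_t$ and to differentiate it along the motion of the boundary. In complex notation, writing the boundary velocity as a complex number $\dot z$ and using $2\partial_{\bar z}P=2\overline{\partial_z P}$ for real $P$, the Laplacian growth boundary condition $v_n=-\nabla P$ (really a full vector identity here, since $P$ vanishes on $\Gamma_t$ and therefore $\nabla P$ is already normal) becomes $\dot z=-2\overline{\partial_z P}$, so that $\dot{\bar z}=-2\partial_z P$.

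First, I would parametrize a point $z(t)\in\Gamma_t$ moving purely along the normal (tangential drift is gauge) and differentiate the identity $\overline{z(t)}=S(z(t),t)$ in $t$. The chain rule gives $\dot{\bar z}=S_z\dot z+S_t$, and after substituting the boundary velocity,
\begin{equation*}
S_t=\dot{\bar z}-S_z\dot z=-2\partial_z P+2S_z\overline{\partial_z P}.
\end{equation*}
The theorem therefore reduces to the pointwise boundary identity $S_z\,\overline{\partial_z P}=-\partial_z P$ on $\Gamma_t$.

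Second, I would verify that identity directly. Differentiating $S(z,t)=\bar z$ along arc length shows $S_z=\overline{\mathbf t}^{\,2}$ on $\Gamma_t$, where $\mathbf t=dz/ds$ is the unit tangent. Since $P|_{\Gamma_t}=0$, the complex number $P_x+iP_y$ is a real scalar multiple of the unit normal $-i\mathbf t$; converting this to $\partial_z P=\tfrac{1}{2}(P_x-iP_y)$ and multiplying out yields $S_z\overline{\partial_z P}=-\partial_z P$ on $\Gamma_t$. Combined with the previous step, this produces $S_t=-4\partial_z P$ on $\Gamma_t$.

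Third, I would promote the identity from $\Gamma_t$ to a full neighborhood. Both $S_t$ and $-4\partial_z P$ are holomorphic in $z$ in a neighborhood of $\Gamma_t$ (the former because analyticity of $\Gamma_t$ lets $S$ extend holomorphically to one side; the latter because harmonicity of $P$ makes $\partial_z P$ holomorphic), so agreement on the analytic arc $\Gamma_t$ forces agreement throughout the common domain, establishing (\ref{DSP}). The converse runs the same chain in reverse: assuming (\ref{DSP}), restricting to $\Gamma_t$ and using $S_z\overline{\partial_z P}=-\partial_z P$ forces $\dot z=-2\overline{\partial_z P}$, i.e.\ $\dot z=-\nabla P$. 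The main obstacle will be bookkeeping: consistent handling of outward vs.\ inward normal orientation, and of the factors of $\tfrac12$ hiding in the translations between $\nabla$, $\partial_z$, and $\partial_{\bar z}$, is precisely what produces the coefficient $-4$ in (\ref{DSP}) rather than some spurious $-2$ or $-1$.
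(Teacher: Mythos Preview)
Your argument is correct, and it is essentially the classical complex-variable derivation (in the spirit of Howison). It differs genuinely from the route the paper takes. The paper does not prove Theorem~\ref{SF} directly; instead it introduces the Schwarz potential $w$, proves the $n$-dimensional identity $w_t=-nP$ (Theorem~\ref{LGSP}) by showing that both sides solve the same Cauchy problem and invoking Cauchy--Kovalevskaya uniqueness, and then recovers Theorem~\ref{SF} in Remark~2 via $S=2\partial_z w$. The key technical step in the paper is the introduction of the auxiliary ``arrival time'' function $T(\mathbf{x})$ to compute $\nabla w_t$ on $\Gamma_t$, whereas your key step is the pointwise boundary identity $S_z\,\overline{\partial_z P}=-\partial_z P$ obtained from $S_z=\bar{\mathbf t}^{\,2}$.

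What each buys: your approach is self-contained in two dimensions, needs only the Schwarz function and the identity theorem for holomorphic functions, and makes the factor $-4$ transparent as $-2-2$. The paper's approach is dimension-independent and yields the stronger statement about the Schwarz potential (hence about singularities of $w$ rather than of $S$), at the cost of invoking Cauchy--Kovalevskaya and introducing $w$ as an intermediary. Your direct argument does not immediately generalize to $\mathbb{R}^n$, since it relies on $S_z$ and the complex tangent, which have no obvious higher-dimensional analogue; this is exactly why the paper passes through the Schwarz potential.
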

The Schwarz function is only guaranteed to exist in a vicinity of a given analytic curve, 
and \emph{a priori} the domain of analyticity for its time-derivative is not any better.
Thus, it is surprising that for a Laplacian growth, $\frac{\partial}{\partial t} S(z,t)$
coincides with a function analytic throughout $\Omega_t$ except at the singularity prescribed at the ``sink''.
In other words, we can extract from equation (\ref{DSP}) the following elegant description of solutions to problem (\ref{LG}):
\emph{Singularities in $\Omega_t$ of the Schwarz function of $\partial \Omega_t$ do not move 
except for one simple pole stationed at the sink ${\bf x}_0$ which decreases in strength at the rate $-Q$.}
Since equation \ref{DSP} is given in physical coordinates rather than introducing a uniformized ``mathematical plane'', 
S. Howison \cite{How} has called it an \emph{intrinsic} description.
In recent papers, it is typical to see a combination of the Schwarz function and the conformal map used to derive solutions (e.g. \cite{AMZ}).
We will review some familiar examples in Section 4 and understand them completely in terms of Theorem \ref{SF}.

The Schwarz function has been partially generalized by D. Khavinson and H. S. Shapiro to higher dimensions by defining a ``Schwarz potential'', 
a solution of a certain Cauchy problem for the Laplace equation \cite{Shapiro}.  
In Section 2, we will review the definition of the Schwarz function and the Schwarz potential before proving  
the $n$-dimensional version of Theorem \ref{SF}.  
We also give a further generalization to the elliptic growth problem.
The rest of the paper is guided by Theorem \ref{LGSP}, which identifies, as
the main obstacle, the problem of describing (globally) the singularities of the Schwarz potential.  
In Section 4 we follow the observation made by L. Karp that the Schwarz potential of four-dimensional, 
axially-symmetric surfaces can be calculated exactly \cite{Karp}.  
We give some explicit examples and also describe some examples of elliptic growth.
In Section 5, we use $\mathbb{C}^n$ techniques to understand the Schwarz potential's singularity set for a nontrivial example in $\mathbb{R}^n$ including the important case $n=3$.
In Section 6, we discuss the connection to quadrature domains and Richardson's Theorem.

\section{Dynamics of Singularities}

\subsection{The Schwarz Potential}

Suppose $\Gamma$ is a non-singular, real-analytic curve in the plane.  
Then the Schwarz function $S(z)$ is the function that is complex-analytic in a neighborhood of $\Gamma$ and coincides with $\bar{z}$ on $\Gamma$ (see \cite{Davis} 
for a full exposition).  If $\Gamma$ is given algebraically as the zero set of a polynomial $P(x,y)$, 
we can obtain $S(z)$ by making the complex-linear change of variables $z=x+iy$, $\bar{z}=x-iy$, 
and then solving for $\bar{z}$ in the equation $P(\frac{z+\bar{z}}{2},\frac{z-\bar{z}}{2i})=0$.  
For instance, suppose $\Gamma$ is the curve given algebraically by the solution set of the equation $(x^2+y^2)^2=a^2(x^2+y^2)+4\varepsilon^2 x^2$ (``C. Neumann's oval'').  
Then changing variables we have $(z \bar{z})^2=a^2(z \bar{z})+\varepsilon^2 (z+\bar{z})^2$.  
Solving for $\bar{z}$ gives $S(z)=\frac{z(a^2+2\varepsilon^2)+z\sqrt{4a^4+4a^2\varepsilon^2+4\varepsilon^2z^2}}{2(z^2-\varepsilon^2)}$.

Suppose $\Gamma$ is more generally a nonsingular, analytic \emph{hypersurface} in $\mathbb{R}^n$, and consider the following Cauchy problem posed in the vicinity of $\Gamma$.
The solution exists and is unique by the Cauchy-Kovalevskaya Theorem.

\begin{equation}\label{CP}
\left\{
\begin{array}{l}
\Delta w = 0 $ near $ \Gamma \\
w|_{\Gamma} =\frac{1}{2}||{\bf x}||^2 \\
\nabla w|_{\Gamma}={\bf x}
\end{array}\right.
\end{equation}

\begin{definition} The solution $w({\bf x})$ of the Cauchy problem \ref{CP} is called the Schwarz Potential of $\Gamma$.  
\end{definition}

{\bf Example:}  Let $\Gamma := \{{\bf x} \in \mathbb{R}^n : ||{\bf x}||^2 = r^2 \}$ be a sphere of radius $r$. 
When $n=2$, it is easy to verify that $w(z) = r^2 \left( \log|z| + 1/2 - \log(r) \right)$ solves the Cauchy Problem (\ref{CP}),
and in higher dimensions the Schwarz potential is $w({\bf x}) = -\frac{r^n}{(n-2)||{\bf x}||^{n-2}} + \frac{n}{2(n-2)} r^2$.

In $\mathbb{R}^2$, the Schwarz function can be directly recovered from the Schwarz potential.
Consider $S(z) = 2\partial_z w = w_x - iw_y$. 
The Cauchy-Riemann equations for $S$ follow from harmonicity of $w$, 
and $\nabla w = {\bf x}$ on $\Gamma$ implies $S(z) = \bar{z}$ on $\Gamma$.

This gives a partial generalization of the Schwarz function.  
The reflection principle associated with the Schwarz function does not generalize to higher dimensions by this or any other means,
but the Schwarz potential retains other desirable properties.  
In particular, it allows us to generalize Theorem \ref{SF} to higher dimensions.

\subsection{Laplacian growth and the Schwarz potential}\label{sec:LGSP}

The following theorem generalizes Thereom \ref{SF}.  
We consider a family of domains $\Omega_t \subset \mathbb{R} ^n$ with analytic boundary that also has analytic time-dependence.  
Such regularity assumption is natural for us since we are in pursuit of explicit, exact solutions. 
However, we should mention that analyticity of the boundary is a necessary condition for existence of a classical solution, and 
moreover for an analytic initial boundary there exists a unique solution remaining analytic 
with analytic time-dependence for at least some interval of time (see \cite{Escher} and \cite{Tian}).  
Let $w({\bf x},t)$ denote the Schwarz potential of the boundary $\Gamma_t$ of $\Omega_t$.

\begin{thm}[Dynamics of Singularities: $\mathbb{R}^n$]\label{LGSP}
If $\Omega_t$ and $w({\bf x},t)$ are as above then $\Omega_t$ solves the Laplacian growth problem (\ref{LG}) if and only if 
\begin{equation}\label{DS}
\frac{\partial}{\partial t}w({\bf x},t) =-nP({\bf x},t)
\end{equation}
where $n$ is the spatial dimension.  
In particular, singularities of the Schwarz potential in the ``oil domain'' do not depend on time, 
except for one stationed at the source (sink) which does not move but simply changes strength.
\end{thm}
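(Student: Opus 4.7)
The plan is to show that $u := \partial_t w + nP$ is harmonic in a neighborhood of $\Gamma_t$ with vanishing Cauchy data on $\Gamma_t$, and hence vanishes identically there by Cauchy--Kovalevskaya (or Holmgren uniqueness). The equation $\partial_t w = -nP$ then propagates by analytic continuation into all of $\Omega_t$, where $P$ carries its prescribed singularity at ${\bf x}_0$; this is what yields the ``dynamics of singularities'' interpretation. Harmonicity of $u$ is immediate: $w$ is harmonic wherever it is defined, so $w_t$ inherits harmonicity, and $P$ is harmonic throughout $\Omega_t$.

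For the Dirichlet half of the Cauchy data, I would differentiate the boundary condition $w({\bf x}(t),t) = \frac{1}{2}||{\bf x}(t)||^2$ along a boundary trajectory ${\bf x}(t) \in \Gamma_t$: the chain rule gives $\nabla w \cdot \dot{\bf x} + w_t = {\bf x} \cdot \dot{\bf x}$, and the Neumann datum $\nabla w|_{\Gamma_t} = {\bf x}$ makes the transport terms cancel, leaving $w_t|_{\Gamma_t} = 0$. Since $P|_{\Gamma_t} = 0$ by hypothesis, this already gives $u|_{\Gamma_t} = 0$.

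For the Neumann half, the key trick is to work with $f := w - \frac{1}{2}||{\bf x}||^2$, which is annihilated together with its gradient on $\Gamma_t$ and satisfies $\Delta f = -n$ (a constant, since $\Delta(\frac{1}{2}||{\bf x}||^2) = n$). Because $f$ and $\nabla f$ both vanish on $\Gamma_t$, every tangential second derivative of $f$ vanishes there, and the trace identity $\Delta f = -n$ collapses to ${\rm Hess}(f)|_{\Gamma_t} = -n\,{\bf n}{\bf n}^T$. Differentiating $\nabla f = 0$ along the boundary motion yields ${\rm Hess}(f)\,\dot{\bf x} + \nabla f_t = 0$ on $\Gamma_t$; projecting onto ${\bf n}$ and substituting the Hessian formula produces $\partial_n w_t|_{\Gamma_t} = n\,v_n$. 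Under the Laplacian growth condition $v_n = -\partial_n P|_{\Gamma_t}$ this becomes $\partial_n u|_{\Gamma_t} = 0$, completing the vanishing of the Cauchy data. The converse direction simply reverses this computation: $\partial_t w = -nP$ on $\Gamma_t$ forces $\partial_n w_t = -n\,\partial_n P$, and combining with $\partial_n w_t|_{\Gamma_t} = n\,v_n$ recovers the kinematic condition.

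The step I expect to require the most care is extending the identity $\partial_t w = -nP$ from the strip around $\Gamma_t$ (where $w$, and hence $w_t$, is a priori defined) to all of $\Omega_t \setminus \{{\bf x}_0\}$. This is where the global behaviour of the Schwarz potential enters and delivers the dynamics-of-singularities picture: one is effectively redefining $w_t$ globally by the right-hand side $-nP$, so that inside $\Omega_t$ it has precisely one singularity --- the prescribed source-type pole at ${\bf x}_0$ changing strength at rate $-Q$ --- while every other singularity of the Schwarz potential remains frozen in place, as the theorem claims.
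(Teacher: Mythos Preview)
Your proof is correct, and the Dirichlet half (differentiating the boundary identity along a trajectory) is exactly what the paper does. The Neumann half, however, is genuinely different from the paper's argument.

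The paper introduces the \emph{arrival-time function} $T({\bf x})$, defined so that ${\bf x}\in\Gamma_{T({\bf x})}$. From the Neumann datum one has $\nabla w({\bf x},T({\bf x}))={\bf x}$ as an identity in ${\bf x}$; taking the $x_k$-derivative of the $k$th component and summing gives $\nabla w_t\cdot\nabla T+\Delta w=n$, i.e.\ $\nabla w_t\cdot\nabla T=n$. Then $\nabla T=v_n/\|v_n\|^2=-\nabla P/\|\nabla P\|^2$ converts this into $\nabla w_t\cdot\nabla P=-n\|\nabla P\|^2$, and since $w_t$ vanishes on $\Gamma_t$ its gradient is normal there, forcing $\nabla w_t=-n\nabla P$.

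Your route instead exploits the second-order structure of $f=w-\tfrac12\|{\bf x}\|^2$: because $\nabla f$ vanishes identically on $\Gamma_t$, each component $f_{x_i}$ has zero tangential derivative, so $\mathrm{Hess}(f)$ annihilates the tangent space and must equal $c\,{\bf n}{\bf n}^T$; the trace $\Delta f=-n$ pins down $c=-n$. Differentiating $\nabla f=0$ along the flow then yields $\nabla w_t=n\,v_n\,{\bf n}$ directly. This avoids the auxiliary function $T$ and is arguably more local. It also generalizes cleanly to the elliptic-growth setting of Theorem~\ref{EGSP}: with $f=u-q$ one still has $\mathrm{Hess}(f)\tau=0$ on $\Gamma_t$, and on the boundary (where $\nabla f=0$) the equation $\mathrm{div}(\lambda\rho\nabla f)=-n\rho$ collapses to $\Delta f=-n/\lambda$, giving $\mathrm{Hess}(f)=-(n/\lambda){\bf n}{\bf n}^T$; combined with $v_n=-\lambda\,\partial_n P$ this again produces $\nabla u_t=-n\nabla P$. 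So both approaches cover both theorems; the paper's has the virtue of packaging the whole family $\{\Gamma_t\}$ into a single scalar function, while yours stays at a fixed $t$ and reads off the normal derivative from the Hessian rank.

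Your final paragraph on propagating the identity into $\Omega_t\setminus\{{\bf x}_0\}$ is accurate and matches what the paper leaves implicit: once $w_t=-nP$ holds near $\Gamma_t$, analytic continuation of both sides carries it inward, and the singularity structure of $P$ dictates that of $w_t$.
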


\emph{Remark 1:} This relates the solution of a ``mathematically-posed'' Cauchy problem to that of a ``physically-posed'' Dirichlet problem.

\emph{Remark 2:} Considering the relationship between the Schwarz potential and Schwarz function, in the case of $n=2$, the Theorem says that
$S_t = \frac{\pd}{\pd t}(2\pd_z w) = -4\pd_z P$ which is the content of Theorem \ref{SF}.

\emph{Remark 3:} This is closely related to the celebrated Richardson's Theorem \cite{Rich}. 
Actually, the connection can be established through the role that the Schwarz potential plays in the theory of quadrature domains (see Section \ref{sec:QD}).
Here we are able to give a more elementary proof consisting of two applications of the chain rule.

\begin{proof}
Assume $\{\Omega_t\}$ solves the Laplacian growth problem.  We will show that for each $t$, $w_t({\bf x},t)$ and $-nP({\bf x},t)$ solve the same Cauchy problem.  
Then by the uniqueness part of the Cauchy-Kovalevskaya Theorem, they are identical.

First, we will show that $w_t({\bf x},t)|_{\Gamma_t} = 0$.  Consider a point ${\bf x}(t)$ which is on $\Gamma_t$ at time $t$.  The chain rule gives
\begin{equation}
	\frac{d}{dt}w({\bf x}(t),t)=\nabla w({\bf x}(t),t)\cdot{\bf \dot{x}}(t)+ w_t ({\bf x}(t),t)
	\label{first}
\end{equation}

On the other hand, by the first piece of Cauchy data in (\ref{CP}), 
\begin{equation}\label{cauchy1}
  \frac{d}{dt} w({\bf x}(t),t) = \frac{d}{dt} (\frac{1}{2} ||{\bf x}(t)||^2)={\bf x}(t) \cdot {\bf \dot{x}}(t)
\end{equation}
By the second piece of Cauchy data, 
\begin{equation}\label{cauchy2}
  {\bf x}(t) \cdot {\bf \dot{x}}(t) = \nabla w ({\bf x}(t),t) \cdot {\bf \dot{x}}(t)
\end{equation}
Combining (\ref{cauchy1}) and (\ref{cauchy2}) with equation (\ref{first}) gives
\begin{equation}
	w_t ({\bf x},t)|_{\Gamma_t} = 0
	\label{wdot}
\end{equation}
We are done if we can show that $\nabla w_t |_{\Gamma_t} = -n \nabla p$.  
Given some position, ${\bf x}$, let $T({\bf x})$ assign the value of time precisely when the boundary, $\Gamma_{t}$, of the growing domain passes ${\bf x}$.  
Then by the Cauchy data defining the Scwharz potential (\ref{CP}), $w_{x_k}(x_1,x_2,...,x_n,T(x_1,x_2,...,x_n)) = x_k$.  
Taking the partial with respect to $x_k$ of the $kth$ equation gives $w_{tx_k} T_{x_k} + w_{x_k x_k}=1$.  
Summing these $k$ equations together gives
\begin{equation}
	\nabla w_t \cdot \nabla T + \Delta w = n.
	\label{gradt}
\end{equation}
Since $\Gamma_t$ is the level curve $T({\bf x}) = t$, $\nabla T$ is orthogonal to $\Gamma_t$, and $\nabla T = \frac{v_n}{||v_n||^2}$, where 
$v_n$ is the normal velocity of $\Gamma_t$.  Recall, $v_n=-\nabla P$.  Thus, $\nabla T = \frac{-\nabla P}{||\nabla P||^2} $.  
Substitution into equation (\ref{gradt}) gives $\nabla w_t \cdot \nabla P = -n||\nabla P||^2$.

By equation (\ref{wdot}), $w_t|_{\Gamma_t}=0$, which implies that $\nabla w_t$ and $\nabla P$ are parallel.  
So, $\nabla w_t|_{\Gamma_t} = -n\nabla P$.
\end{proof}

\subsection{A Cauchy problem connected to Elliptic Growth}\label{sec:EG}

A natural generalization of the Laplacian growth problem is to allow a non-constant ``filtration coefficient'' $\lambda$ and  ``porosity'' $\rho$.
Then instead of Laplace's equation the pressure satisfies $\text{div}( \lambda \rho \nabla P) = 0 $
and should have a singularity at the sink of the same type as the fundamental solution to this elliptic equation.
Moreover, the Darcy's law determining the boundary velocity becomes $v_n = -\lambda \nabla P$.
For details, see \cite{KMP}, \cite{L}, \cite{LY}.
Physically, this models the problem in a non-homogeneous medium 
and also relates to the case of Hele-Shaw cells on curved surfaces (in the absence of gravity) studied in \cite[Ch. 7]{Etingof}.

We can formulate an equation similar to equation (\ref{DS}) that relates the pressure function of an elliptic growth 
to the time-dependence of the solution to a certain Cauchy problem. 
Let $q({\bf x})$ be a solution of the Poisson equation, 
\begin{equation}\label{Q}
 \text{div} (\lambda \rho \nabla q) = n \rho,
\end{equation}
where $n$ is the spatial dimension.
Recall that a solution $q$ can be obtained by taking the convolution of $\rho$ with the fundamental solution of the homogeneous elliptic equation (if one exists).
We associate with an elliptic growth having filtration $\lambda$ and porosity $\rho$, the solution $u$ of the following Cauchy problem.

\begin{equation}\label{CPEG}
\left\{
\begin{array}{l}
\text{div}\left(\lambda \rho \nabla u \right)  = 0 $ near $ \Gamma \\
u|_{\Gamma} = q \\
\nabla u|_{\Gamma}= \nabla q
\end{array}\right.
\end{equation}

We can think of $u$ a ``generalized Schwarz potential''.
We have the following direct generalization of Theorem \ref{LGSP}.
As in Section \ref{sec:LGSP}, assume $\Omega_t$ has analytic boundary with analytic time-dependence.

\begin{thm}\label{EGSP}
If $\Gamma_t = \partial \Omega_t$ and $u({\bf x},t)$ is the solution to \ref{CPEG} posed on $\Gamma_t$ then $\Omega_t$ is an elliptic growth with pressure function $P({\bf x},t)$ if and only if 
\begin{equation}\label{EG}
\frac{\partial}{\partial t}u({\bf x},t) =-n P({\bf x},t)
\end{equation}
\end{thm}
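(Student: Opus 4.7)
The plan is to adapt the proof of Theorem \ref{LGSP} by showing that, at each time $t$, both $u_t({\bf x},t)$ and $-nP({\bf x},t)$ solve the same Cauchy problem for the elliptic operator $\text{div}(\lambda \rho \nabla \cdot)$ in a neighborhood of $\Gamma_t$, and then invoking uniqueness in Cauchy--Kovalevskaya. Since $\lambda$ and $\rho$ are $t$-independent, differentiating $\text{div}(\lambda \rho \nabla u) = 0$ in time immediately yields $\text{div}(\lambda \rho \nabla u_t) = 0$, while $-nP$ satisfies the same equation by hypothesis. It remains to match the two Cauchy data on $\Gamma_t$.

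For the first datum, $u_t|_{\Gamma_t} = 0$ follows exactly as in the harmonic case: take a point ${\bf x}(t)$ traveling along $\Gamma_t$ and compute $\frac{d}{dt} u({\bf x}(t),t)$ in two ways---directly from $u|_{\Gamma_t} = q$ to get $\nabla q \cdot \dot{{\bf x}}$, and via the chain rule to get $\nabla u \cdot \dot{{\bf x}} + u_t$. The Cauchy datum $\nabla u|_{\Gamma_t} = \nabla q$ eliminates the $\dot{{\bf x}}$ terms, giving $u_t|_{\Gamma_t}=0$, which matches $-nP|_{\Gamma_t}=0$.

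For the second datum I mimic the $T({\bf x})$ trick of the preceding proof. Differentiating the identity $u_{x_k}({\bf x},T({\bf x})) = q_{x_k}({\bf x})$ in $x_k$ and summing yields $\Delta u + \nabla u_t \cdot \nabla T = \Delta q$. Unlike the harmonic case, $\Delta u$ is not zero here, and this is where the Poisson equation~(\ref{Q}) defining $q$ earns its keep: expanding $\text{div}(\lambda \rho \nabla u) = 0$ gives $\lambda \rho \Delta u = -\nabla(\lambda \rho) \cdot \nabla u$, expanding~(\ref{Q}) gives $\lambda \rho \Delta q = n\rho - \nabla(\lambda \rho) \cdot \nabla q$, and subtracting while using $\nabla u|_{\Gamma_t} = \nabla q$ cancels the first-order transport pieces, leaving $\Delta q - \Delta u = n/\lambda$ on $\Gamma_t$, hence $\lambda \nabla u_t \cdot \nabla T = n$. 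The modified Darcy's law $v_n = -\lambda \nabla P$ gives $\nabla T = -\nabla P/(\lambda ||\nabla P||^2)$; substitution yields $\nabla u_t \cdot \nabla P = -n||\nabla P||^2$, and since both $u_t$ and $P$ vanish on $\Gamma_t$ their gradients are parallel to the normal, forcing $\nabla u_t|_{\Gamma_t} = -n \nabla P|_{\Gamma_t}$.

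The main obstacle is the bookkeeping in the third step: the inhomogeneity $n\rho$ on the right of~(\ref{Q}) is engineered precisely so that the $\nabla(\lambda \rho)$ contributions cancel when subtracting the two divergence-form identities, producing the clean factor $n$ in~(\ref{EG}); any other normalization of $q$ would leave a residual obstruction that spoils parallelism of $\nabla u_t$ and $\nabla P$. The converse direction is obtained by reversing the chain of implications, using the Cauchy data on $\Gamma_t$ together with uniqueness for the elliptic Cauchy problem to recover Darcy's law $v_n = -\lambda \nabla P$.
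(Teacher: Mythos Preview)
Your proof is correct and follows essentially the same approach as the paper: both verify that $u_t$ and $-nP$ share Cauchy data on $\Gamma_t$ via the chain-rule argument for the zeroth datum and the $T({\bf x})$ trick for the first, then invoke Cauchy--Kovalevskaya uniqueness. The only cosmetic difference is in the middle computation: the paper multiplies the identity $\nabla u({\bf x},T({\bf x})) = \nabla q$ by $\lambda\rho$ \emph{before} taking the divergence, so the divergence-form operators $\text{div}(\lambda\rho\nabla u)=0$ and $\text{div}(\lambda\rho\nabla q)=n\rho$ appear directly without ever expanding into Laplacian-plus-transport pieces, whereas you take the unweighted divergence first and then cancel the $\nabla(\lambda\rho)$ terms by hand; both routes land on $\lambda\,\nabla u_t\cdot\nabla T = n$.
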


\begin{proof}
As in the proof of Theorem \ref{LGSP}, we show that both sides of \ref{EG} solve the same Cauchy problem.

The first part of the argument is similar in showing that.  
\begin{equation}\label{udot2}
	u_t ({\bf x},t)|_{\Gamma_t} = 0
\end{equation}
Consider a point ${\bf x}(t)$ which is on $\Gamma_t$ at time $t$.  The chain rule gives
\begin{equation}
	\frac{d}{dt}u({\bf x}(t),t)=\nabla u({\bf x}(t),t)\cdot{\bf \dot{x}}(t)+ u_t ({\bf x}(t),t)
	\label{first2}
\end{equation}

On the other hand, by the first piece of Cauchy data in (\ref{CPEG}) and the chain rule again,
\begin{equation}\label{diffq}
  \frac{d}{dt} u({\bf x}(t),t) = \frac{d}{dt}  q({\bf x}(t))=\nabla q({\bf x}(t)) \cdot {\bf \dot{x}}(t)
\end{equation}
By the second piece of Cauchy data, 
\begin{equation}\label{qcauchy}
  \nabla q({\bf x}(t)) \cdot {\bf \dot{x}}(t) = \nabla u ({\bf x}(t),t) \cdot {\bf \dot{x}}(t)
\end{equation}
Combining (\ref{diffq}) and (\ref{qcauchy}) with equation (\ref{first2}) gives the equation (\ref{udot2}).

We are done if we can show that $\nabla  u_t  |_{\Gamma_t} = - n \nabla P$.  

We again let $T({\bf x})$ assign the value of time when $\Gamma_{t}$ passes ${\bf x}$.  
Then by the Cauchy data defining $u$, $\nabla u({\bf x},T({\bf x})) = \nabla q$.
Multiply both sides by $\lambda \rho$ and take the divergence:
\begin{equation}
  \lambda \rho \nabla u_t \cdot \nabla T + \text{div}(\rho \lambda \nabla u) = \text{div}({\rho \lambda \nabla q}),
\end{equation}
which, by definition of $u$ and $q$, simplifies to
\begin{equation}\label{gradt2}
  \lambda \nabla u_t \cdot \nabla T = n.
\end{equation}
As in the proof of Theorem \ref{LGSP}, $\nabla T = \frac{v_n}{||v_n||^2}$, where 
$v_n$ is the normal velocity of $\Gamma_t$. Except now $v_n=-\lambda \nabla P$.  Thus, $\nabla T = \frac{-\nabla P}{\lambda||\nabla P||^2} $.  
Substitution into equation (\ref{gradt2}) gives $\nabla u_t \cdot \nabla P = -n||\nabla P||^2$.

By equation (\ref{udot2}), $u_t|_{\Gamma_t}=0$, which implies that $\nabla u_t$ and $\nabla P$ are parallel.  
So, $\nabla u_t|_{\Gamma_t} = -n\nabla P$.
\end{proof}

Let us discuss a special case of the above.
Suppose that the Problem (\ref{Q}) has a solution $q$ that is entire. 
Let $\alpha$ denote $\lambda \rho$, and suppose $\alpha$ is also entire.
For instance, $\lambda = \frac{1}{x^2+1}$ and $\rho = x^2 + 1$ gives $\alpha = 1$ and $q = x^4 + x^2$.
When $\alpha=1$ as in this case, the ``elliptic growth'' is just a Laplacian growth with a variable-coefficient law governing the boundary velocity.
The problem (\ref{CPEG}) defining $u$ becomes a Cauchy problem for Laplace's equation with entire data.
This is the realm of the Schwarz potential conjecture formulated by Khavinson and Shapiro:

\begin{conj}[Khavinson, Shapiro]
 Suppose $u$ solves the Cauchy problem for Laplace's equation posed on a nonsingular analytic surface $\Gamma$ with real-entire data.
Then the singularity set of $u$ is contained in the singularity set of the Schwarz potential $w$.
\end{conj}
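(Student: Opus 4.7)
The plan is to attack the conjecture by working in $\mathbb{C}^n$ and comparing the holomorphic continuations of $u$ and $w$ via the characteristic geometry of the Laplace operator. First I would complexify: $\Gamma$ extends to a nonsingular complex hypersurface $\Gamma_{\mathbb{C}} \subset \mathbb{C}^n$, and both the Cauchy data of $u$ (entire, by hypothesis) and of $w$ (polynomial) extend to entire functions on $\mathbb{C}^n$. The Cauchy--Kovalevskaya theorem then produces holomorphic solutions of $\sum \partial_{z_j}^2 U = 0$ in some neighborhood of each noncharacteristic point of $\Gamma_{\mathbb{C}}$, and the conjecture becomes a statement about where these holomorphic continuations are forced to fail.

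The central tool would be the theory of the characteristic Cauchy problem (Leray, Sternin--Shatalov), which locates the obstruction to continuation on the bicharacteristic conoid swept out by complex null lines $\{z : (z-\zeta)\cdot(z-\zeta) = 0\}$ emanating from points $\zeta \in \Gamma_{\mathbb{C}}$ at which $\Gamma_{\mathbb{C}}$ becomes tangent to the null quadric of the symbol of $\Delta$. Crucially, this obstruction depends only on $\Gamma_{\mathbb{C}}$ and the operator, not on the (entire) Cauchy data; so any solution with entire data has singularities confined to this same geometric locus $\Sigma_{\Gamma}$. The first substantive step is to verify that $w$ realizes all of $\Sigma_{\Gamma}$, i.e.\ that no accidental cancellation occurs for the particular polynomial data $(\tfrac{1}{2}\|{\bf x}\|^2, {\bf x})$. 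The quadratic nature of this data, matched to the quadratic symbol of $\Delta$, is what makes it a plausible universal envelope and is the reason the Schwarz potential, rather than some other canonical solution, should sit at the top.

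The harder step is promoting the family of local analytic continuations into a genuine global inclusion of singularity sets. Locally one can argue that whenever a bicharacteristic from $\zeta \in \Gamma_{\mathbb{C}}$ reaches a point where $w$ is holomorphic, so does $u$; but assembling these local extensions across all of $\mathbb{R}^n \setminus \mathrm{sing}(w)$ requires controlling the monodromy around the singular locus. Following the strategy the paper signals in its introduction, I would construct a ``globalizing family'': a holomorphic one-parameter deformation of Cauchy data interpolating between $(f, \nabla f)$ and $(\tfrac{1}{2}\|{\bf x}\|^2, {\bf x})$, whose solutions $u_s$ depend holomorphically on the parameter $s$. A putative singularity of $u$ lying outside $\mathrm{sing}(w)$ would then persist along the family and produce a contradiction at $s$ corresponding to $w$.

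The main obstacle, in my expectation, is exactly this global step. Cauchy--Kovalevskaya together with the observation that entire data contribute no intrinsic singularities handles everything locally, but global single-valuedness of the continuation of $u$ requires ruling out a multi-sheeted branch whose ramification collapses only for the special polynomial data of $w$. This monodromy problem, together with the possibility that $\Gamma_{\mathbb{C}}$ develops complicated behavior at infinity which interacts nontrivially with the null quadric, is what has kept the conjecture open in full generality despite the clean local picture; a successful proof along these lines would almost certainly have to exploit additional rigidity of the specific quadratic data, rather than treat it as a formal consequence of entireness alone.
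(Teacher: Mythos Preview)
The statement you were asked to prove is a \emph{conjecture} in the paper, not a theorem: the paper offers no proof. It remarks only that the conjecture is known in the plane and holds ``generically'' in higher dimensions (citing Sternin--Shatalov), and otherwise treats it as open. So there is no proof in the paper to compare your proposal against.

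Your proposal is honest about this: you outline an approach via Leray's principle and the characteristic geometry of the complexified Laplacian, and you correctly identify the global monodromy step as the obstruction that has kept the conjecture open. That diagnosis matches the state of affairs the paper alludes to. But your write-up is not a proof and should not be presented as one. In particular, the step you label ``the first substantive step''---showing that the Schwarz potential $w$ actually realizes all of the geometric singular locus $\Sigma_\Gamma$, with no cancellation---is itself nontrivial and not established by anything you wrote; and your ``globalizing family'' interpolation argument is only a heuristic, since a singularity of $u_s$ need not persist under deformation of $s$ (singularities can appear or disappear along analytic families). If you intend this as a research plan rather than a proof, say so explicitly; as a purported proof of the conjecture it has a genuine gap at exactly the place you yourself flag.
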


The conjecture holds in the plane and has been shown to hold ``generically'' in higher dimensions \cite{Stern}.
If the conjecture is true, then for the case when $\alpha = 1$, the singularities of $u$ are controlled throughout time by those of $w$.
Combining this with Theorems \ref{LGSP} and \ref{EGSP} implies that, given a solution of the Laplacian growth problem (\ref{LG}),
the exact same evolution can be generated amid an elliptic growth law with $\alpha = 1$ by a pressure function having singularities at the same locations as those of $w$.
The singularities may have different time-dependence and be of a different type.

For instance, consider the plane and the simplest Laplacian growth of suction from the center of a circle 
so that at time $t$, the Schwarz function is $\frac{1-t}{z}$ (a constant rate of suction).
Let us determine the pressure required to generate the same process when $\lambda = \frac{1}{x^2+1}$ and $\rho = x^2+1$.
To solve for $u$, we notice that $\partial_z u$ is analytic and coincides with $2x^3 + x$ on the shrinking circle.
Since $x = \frac{z+S(z)}{2}$ on the boundary, we have $\partial_z u = (z+\frac{1-t}{z})^3/4+z/2+\frac{1-t}{2z}$.
This is even true off the boundary since both sides are analytic.
The singular terms are $\frac{5-8t+3t^2}{4z}$ and $\frac{1-3t + 3t^2-t^3}{4z^3}$.
Thus, in order to generate the same ``movie'', the pressure must have a fundamental solution type singularity along with a weak ``multi-pole'' at the origin, 
both diminishing at non-constant rates.

\section{Examples}

In this section we will understand some explicit solutions in terms of the Theorems \ref{SF}, \ref{LGSP}, and \ref{EGSP}.

\subsection{Laplacian growth two dimensions}\label{sec:plane}

First we review some familiar examples in the plane, where
typically a time-dependent conformal map is introduced.  
Instead, we work entirely with the Schwarz function and check that Theorem \ref{SF} is satisfied.

{\bf Example 1:} Consider the family of domains $D$ with boundary given by the curves $\{z:z=aw^2+bw,|w|<1 \}$ with $a, b$ real.  
The Schwarz function is given by $S(z)= -2ab/(a-\sqrt{a^2+4bz})+4b^3/(a-\sqrt{a^2+4bz})^2$ which has a single-valued branch in the interior of the curve 
for appropriate parameter values $a$ and $b$.  
The only singularities of the Schwarz function interior to the curve are a simple pole and a pole of order two at the origin.  
Given an initial domain from this family we can choose a one-parameter slice of domains so that the simple pole increases (resp. decreases) while the pole of order two does not change.  This gives an exact solution to the Laplacian growth problem with injection (resp. suction) taking place at the origin.  In the case of injection, the domain approaches a circle.  In the case of suction, the domain develops a cusp in finite time.

Instead of just one sink or source ${\bf x_0}$ with rate $Q$, 
let us extend problem \ref{LG} by allowing for multiple sinks and/or sources ${\bf x_i}$ with suction/injection rates $Q_i$.  
This is the formulation of the problem which is often made, for instance, see the excellent exposition \cite{Etingof}.  
The proof of Theorem \ref{LGSP} carries through without changes so that the time-derivative of the Schwarz potential still coincides with $-nP({\bf x},t)$.
The only difference is that now there can be multiple time-dependent point-singularities inside $\Omega_t$.

{\bf Example 2:} We first consider the family of curves mentioned in Section 2.1.  
The Schwarz function of the boundary is $$S(z)=\frac{z(a^2+2\varepsilon^2)+z\sqrt{4a^4+4a^2\varepsilon^2+4\varepsilon^2z^2}}{2(z^2-\varepsilon^2)}$$
which has two simple poles at $z= \pm \varepsilon$ each with residue $(a^2+2\varepsilon^2)/2$.  
In order to satisfy the conditions imposed by Theorem \ref{LGSP} we choose $\varepsilon=1$ constant.  
Then we choose $a(t)$ to be decreasing (increasing) to obtain suction (injection) at two sinks (sources).  
In the case of suction, the oval forms an indentation at the top and bottom and becomes increasingly pinched 
as the boundary approaches two tangent circles centered at $\pm 1$, the positions of the sinks (see Figure \ref{fig:Neumann}).  

\begin{figure}[t]
    \includegraphics[scale=.5]{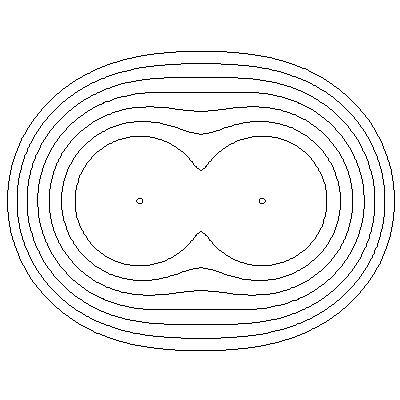}
    \caption{An example with two sinks.}
    \label{fig:Neumann}
\end{figure}

For the next example, we consider the case of Problem (\ref{LG}) where the ``oil domain'' $\Omega_t$ is unbounded with a sink at infinity.

{\bf Example 3:} We recall the Schwarz function for an ellipse given by the solution set of the equation $\frac{x^2}{a^2}+\frac{y^2}{b^2}=1$.
Changing variables we have $\frac{(z+\bar{z})^2}{a^2}-\frac{(z-\bar{z})^2}{b^2}=4$.
Solving for $\bar{z}$ gives $S(z) = \frac{a^2 + b^2}{a^2 - b^2}z+\frac{2ab}{b^2 - a^2}\sqrt{z^2 + b^2 - a^2}$.
$S(z)$ has a square root branch cut along the segment joining the foci $\pm \sqrt{a^2-b^2}$, but 
we are only interested in the exterior of the ellipse, where $S(z)$ is free of singularities.
This already guarantees that any evolution of ellipses that has analytic time dependence can be generated 
by preparing the correct asymptotic pressure conditions to match $S_t(z,t)$ which is only singular at infinity.
In other words, we can use equation \ref{DSP} to work backwards in specifying the pressure condition to generate the given flow.
Since there are no finite singularities, we only have to specify the conditions at infinity.
A realistic case is if the asymptotic condition is steady and isotropic:
$S_t(z \rightarrow \infty,t) \approx k/z$ for a constant $k$ independent of $t$.
Take a homothetic growth with $a(t) = a \sqrt{t}$ and $b(t) = b \sqrt{t}$ from some initial ellipse with semi-axes $a$ and $b$. 
Then $S(z,t) = \frac{a^2 + b^2}{a^2 - b^2}z+\frac{2ab}{b^2 - a^2}\sqrt{z^2 + t(b^2 - a^2)}$,
and we have $S_t(z,t) = k \frac{1}{\sqrt{z^2+t(b^2-a^2)}}$, where $k=2ab$.

\subsection{Examples and non-examples in $\mathbb{R}^4$}\label{sec:r4}

Next, we consider axially-symmetric, four-dimensional domains.  
This turns out to be simpler than the more physically relevant $\mathbb{R}^3$, 
and we will see in the next subsection that it is
is equivalent to certain cases of elliptic growth in two and three dimensions.
Lavi Karp \cite{Karp} has given a procedure, including several explicit examples, 
for obtaining the singularities of the Schwarz potential for a domain $\Omega$ that is the rotation into $\mathbb{R}^4$ 
of a domain in $\mathbb{R}^2$ with Schwarz function $S(z)$.  
We outline here this procedure for finding the Schwarz potential $w(x_1,x_2,x_3,x_4)$.  
Since $w$ solves a Cauchy problem for axially symmetric data posed on an axially symmetric hypersurface, with, say $x_1$ as the axis of symmetry, 
it is a function of two variables.  Write $x=x_1, y=\sqrt{x_2^2+x_3^2+x_4^2}$, and $w(x_1,x_2,x_3,x_4)=U(x,y)$.  
What makes $\mathbb{R}^4$ convenient to work with is the fact that $V(x,y) = y \cdot U(x,y)$ is a harmonic function of the variables $x$ and $y$.  
Thus, finding $U(x,y)$ is reduced to solving an algebraic Cauchy problem in the plane, which can be done in terms of the Schwarz function $S(x+iy)=S(z)$.  
The steps for writing this solution are outlined below.

{\bf Step 1:} Write $f(z)=\frac{i}{2}S(z) \cdot (S(z)-2z)$.
\\{\bf Step 2:} Find a primitive function $F(z)$ for $f(z)$.
\\{\bf Step 3:} Write $V(x,y)=Re\{F(z)\}$.  Then the Schwarz potential for $\Omega$ is $U(x,y)=\frac{V(x,y)+const.}{y}$.

One of the examples carried through this procedure in \cite{Karp} is the family of ``limacons'' from Example $1$.  
The result is that the Schwarz potential can be expanded about the origin as 
$w(x_1,x_2,x_3,x_4)=A_2(a,b) \left( \frac{\partial}{\partial x_1}\right)^2|{\bf x}|^{-2}+A_1(a,b)\frac{\partial}{\partial x_1}|{\bf x}|^{-2}+A_0(a,b)|{\bf x}|^{-2}+H({\bf x})$, 
where $H({\bf x})$ is harmonic and $A_2(a,b)= -b^2a^4/12$, $A_1(a,b)= ba^2(a^2+2b^2)/2$, and $A_0(a,b)= -(a^4+6a^2b^2+2b^4)/2$.  
We can interpret one-parameter slices of this family as a Laplacian growth if we further extend problem \ref{LG} to allow for ``multi-poles''
(see \cite{EEK} for a discussion of multi-pole sollutions in the plane).
If we want a Laplacian growth with just a simple sink then according to the dynamics-of-singularities imposed by Theorem \ref{LGSP},  
we need to choose the time-dependence of $a$ and $b$ so that the only 
singularity whose coefficient changes is the fundamental-solution type singularity $A_0(a,b)|{\bf x}|^{-2}$.
Thus, where $C_1$, $C_2$ are constants, we need to have:
\begin{equation}\label{overdet}
\left\{
\begin{array}{l}
A_2(a(t),b(t)) = C_2 \\
A_1(a(t),b(t)) = C_1 \\
\end{array}\right.
\end{equation}
Unfortunately, solutions $a$ and $b$ of this system are locally constant so that $A_0$ must then be constant and the whole surface does not move at all.  
The other examples of axially symmetric domains considered in \cite{Karp} also require introducing multi-poles or even a continuum of singularities, 
otherwise the conditions imposed by simple sources/sinks leads to a similarly overdetermined system.  
Roughly speaking, the difficulty is that $f(z)$ from Step 1 above generally has more singularities than $S(z)$.  
Thus, if a class of domains in the plane has enough parameters to control the singularities and obtain a Laplacian growth,
then rotation into $\mathbb{R}^4$ introduces more singularities which must be controlled with the same number of parameters.

There are exceptions: we can describe some exact solutions involving a simple source and sink (no multipoles).  
Consider the hypersurfaces of revolution obtained by rotation from a family of curves whose Schwarz functions have two simple poles at $z= \pm 1$
(with not necessarily equal residues).  
This is a two-parameter family of surfaces; 
as parameters, we can take the residues of the Schwarz function of the profile curves.
Let $\Omega$ denote the domain in the plane bounded by the profile curve.
The Schwarz function has the form 
$$S(z) = \frac{A}{z-1}+\frac{B}{z+1}+c(A,B)+d(A,B)z+z^2 H(z,A,B),$$ 
where $H(z,A,B)$ is analytic in $\Omega$.
In what follows, we will suppress the dependence on $A$ and $B$ of higher-degree coefficients.  Following Steps 1 through 3 above, we have 
$$f(z) = \frac{i}{2}S(z) \cdot (S(z)-2z)$$
$$= \frac{i}{2}\left( \frac{A^2}{(z-1)^2}+\frac{B^2}{(z+1)^2}+ \frac{C_1}{z-1} - \frac{C_2}{z+1} + H_1(z) \right),$$
where $H_1(z)$ is analytic in $\Omega$.
Then for Step 2 we need a primitive function for $f(z)$ which is 
$$F(z) = \frac{i}{2}\left( \frac{A^2}{(z-1)}+\frac{B^2}{(z+1)}+ C_1 \text{Log}(z-1) - C_2 \text{Log}(z+1) + H_2(z) \right),$$ 
where $H_2(z)$ is analytic in $\Omega$.

Then for Step 3 we have $V(x,y)=Re\{F(z)\} = \frac{A^2 y}{(x-1)^2+y^2}+\frac{B^2 y}{(x+1)^2+y^2} + C_1 \arg(z-1) + C_2 \arg(z+1) + H_3(z)$.

If we can vary $A$ and $B$ in a way that keeps $C_1$ and $C_2$ constant, 
then the time-derivative of the Schwarz function will satisfy the dynamics-of-singularities condition.  
This seems at first to be another overdetermined problem, but actually $C_1$ and $C_2$ must be equal!  
Otherwise, the two branch cuts of $C_1 \arg(z-1)$ and $C_2 \arg(z+1)$ will not cancel eachother outside the interval $[-1,1]$, 
and the Schwarz potential will become singular on the surface itself.  
This cannot happen since the surface has no points (in $\mathbb{R}^n$) that are characteristic for the Cauchy problem.
Therefore, since $C_1$ and $C_2$ are equal, we spend only one dimension of our parameter space controlling the ``non-physical'' segment of singularities.
This leaves freedom for the ``physical'' singularities to move, at least locally, along a one-dimensional submanifold of parameters.
Figure \ref{r4} shows the evolution of the profile curve for a typical example that can be obtained in this way.

We omit the cumbersome formulae for the time-dependence of coefficients in the algebraic description of such exact solutions.
The two-parameter family of hypersurfaces from which they are selected can be described by the solution set of:
$$\frac{\left((x_1+\frac{h}{2(a^2-h^2)})-h \cdot ((x_1+\frac{h}{2(a^2-h^2)})^2+x_2^2+x_3^2+x_4^2)\right)^2}{a^2}$$
$$-\frac{(4(a^2-h^2)^2-a^2)(x_2^2+x_3^2+x_4^2)}{a^2(a^2-h^2)}=\left(\left(x_1+\frac{h}{2(a^2-h^2)}\right)^2+x_2^2+x_3^2+x_4^2\right)^2.$$   

\begin{figure}[t]
    \includegraphics[scale=.4]{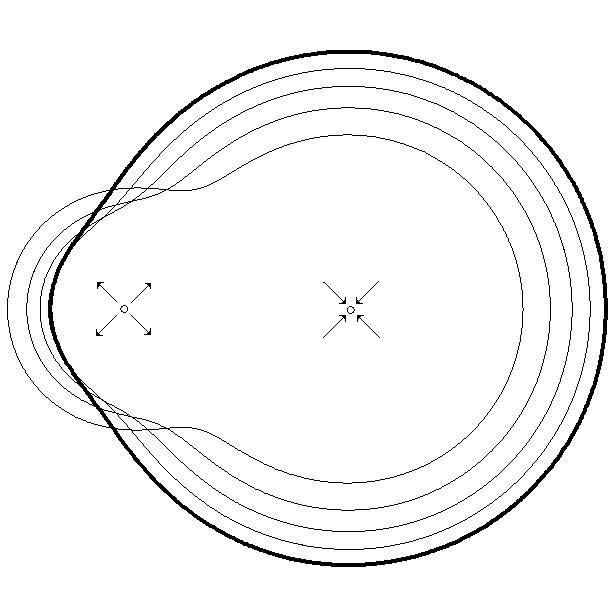}
    \caption{A profile of an axially-symmetric solution in $\mathbb{R}^4$ with injection at one point and suction at another. The initial curve is plotted in bold.}
    \label{r4}
\end{figure}

Similarly, one can obtain examples where the Schwarz function has three or more simple poles.
Again the suction/injection rates will have to occur in a prescribed way 
or else the time-derivative of the Schwarz potential will have singular segments which are difficult to interpret physically.

\emph{Remark:} The rigidity of the inter-dependence of injection/suction rates in the above example is 
made less severe by the fact that the initial and final domains 
only depend on the total quantities injected and removed at the source and sink respectively,
and they are independent of the rates and order of work of the source and sink
(see \cite{Etingof}: the proof extends word for word to higher dimensions).
Thus, injection and suction can happen in any manner, say one at a time, and we will lose the ``movie'' but retain the final domain.

In the next section we will be interested in examples that correspond to axially symmetric surfaces that do not intersect the axis of symmetry.
For instance, to generate a torus, we can choose the profile curve to be a circle of radius $R$ and center $ai$, $a > R > 0$.
The Schwarz function is $S(z) = \frac{R^2}{z-ai} - ai$.
Step 1 gives $f(z) = i/2 (\frac{R^2}{z-ai} - ai)(\frac{R^2}{z-ai} - ai-2z)$.
Step 2 gives $F(z) = \frac{-iR^4}{2(z-ai)} + 2R^2aLog(z-ai) + H(z)$, where $H(z)$ is analytic.
Step 3 gives $V(z) = \frac{-R^4(y-a)}{2|z-ai|^2} + 2R^2a\log|z-ai| + R(z)$, where $R(z)$ is free of singularities.
Finally, the singular part of $U(x,y)$ is $\frac{R^4}{2a y}\frac{\partial}{\partial y} \frac{1}{x^2+y^2}+\frac{2R^2}{y}\log|z-ai|$.

This calculation for the Schwarz potential of the four-dimensional torus was carried out in \cite{Aberra}
and discussed in connection with a classical mean-value-property for polyharmonic functions.

\subsection{Examples of elliptic growth}\label{EGex}

Examples of axially-symmetric, four-dimensional Laplacian growth also solve certain elliptic growth problems in two and three dimensions.
The two-dimensional profile solves the planar elliptic growth problem where the filtration coefficient $\lambda = 1$ is constant, and the porosity $\rho(x,y) = y^2$.
Indeed, we can check that Theorem \ref{EGSP} is satisfied.
The Schwarz potential $U(x,y)$, reduced to two variables, satisfies the equation $\Delta U + \frac{2 U_y}{y} = 0$.
Since $\text{div}\left(y^2 \nabla U \right) = y^2 \Delta U + 2y U_y$, then $U$ solves the Cauchy problem

\begin{equation}
\left\{
\begin{array}{l}
\text{div}\left(y^2 \nabla U \right)  = 0 $ near $ \Gamma \\
U|_{\Gamma} = q \\
\nabla U|_{\Gamma}= \nabla q
\end{array}\right.
\end{equation}

with $q(x,y) = (x^2 + y^2)/8$ solving the Poisson equation $\text{div}\left(y^2 \nabla q \right)  = y^2$.

The three dimensional surfaces of revolution generated by the same profile curves solve a three-dimensional elliptic growth 
if we choose $\lambda=1$ again constant and porosity $\rho(x,y,z) = \sqrt{y^2 + z^2}$.

It is most interesting when the domain, at least initially, avoids the line $\{y=0\}$ where $\rho(x,y)$ vanishes.
Consider, for instance, a circle of radius $R$ centered at $ai$.
This corresponds to the calculation at the end of Section \ref{sec:r4} for the four-dimensional torus.
Accordingly, a shrinking circle can be generated by a simple source combined with a ``dipole flow'' positioned at the center of the circle.

A similar calculation applies more generally when $\lambda(x,y) = y^{2-m}$, $\rho(x,y) = y^{m}$, with $m$ a positive integer, and we can consider more general domains than circles.
For instance, a well-known classical solution of the Laplacian growth in the plane involves domains $\Omega_t$ conformally mapped from the unit disc by polynomials.
Physically the solution has a single sink positioned at the image of the origin under the conformal map.
The Schwarz function of such an $\Omega_t$ is meromorphic except at the sink where its highest order pole coincides with the degree of the polynomial.
So, $S(z) = \sum_{i=1}^{k}{\frac{a_i}{z^i}} + H(z)$, where $H(z)$ is analytic in $\Omega_t$.
The solution $q$ of $\text{div}\left(y^2 \nabla q \right)  = y^m$ is $q(x,y) = \frac{y^{m+2}}{(m+2)(m+3)}$.
To solve for $U(x,y)$ we first notice that $V(x,y) = yU(x,y)$ is harmonic and solves a Cauchy problem with data $yq(x,y) = \frac{y^{m+3}}{(m+2)(m+3)}$.
Thus, $\partial_z V = -\frac{i}{2}\frac{y^{m+2}}{(m+2)}=-\frac{i}{2}\frac{(z-S(z))^{m+2}}{(m+2)(2i)^{m+2}}$ can be analytically continued away from the boundary.
As a result, the flow can be generated by a combination of ``multipoles'' positioned at the same point of order not exceeding $k(m+2)$.
This resembles the result of I. Loutsenko \cite{L} stating that 
the same evolution can be generated by multipoles of a certain order 
under an elliptic growth where $\rho=1$ constant and $\lambda = \frac{1}{y^{2p}}$, with $p$ a positive integer.

\begin{figure}[b]
    \includegraphics[scale=.5]{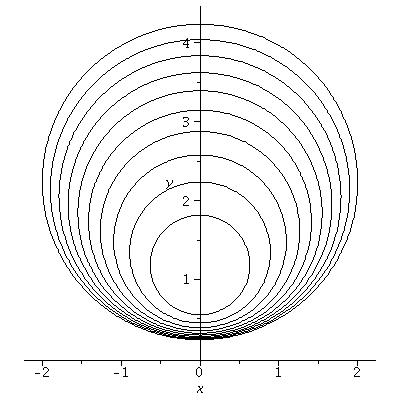}
    \caption{An elliptic growth with multi-poles of order up to 3 positioned at $z=i$.  The Schwarz function has a moving singularity.}
    \label{fig:pole}
\end{figure}

This fails, in an interesting way, for negative values of $m$.  For instance, if $\lambda(x,y) = y^4$ and $\rho(x,y) = 1/y^2$, 
then a circle of shrinking radius $R$ centered at $ai$ is not generated by multipoles positioned at $ai$.
Instead, the generalized Schwarz potential $U(x,y)$ has singularities at the moving point $i \sqrt{a^2-r^2}$.
If we instead allow the center of the shrinking circle to move in a way that keeps $\sqrt{a^2-r^2}$ constant, 
then the evolution can be generated by multi-poles at this point of order up to $3$ (see figure \ref{fig:pole}).
To reiterate, for this evolution of shrinking circles with moving center, the generalized (elliptic) Schwarz potential 
is singular at a stationary point while the analytic Schwarz function has a moving singularity.
Such an example has been anticipated in \cite{KMP}, 
where a system of nonlinear ODEs was given governing both the strength and the moving position of the Schwarz function's singularities under an elliptic growth.

\section{The Schwarz potential in $\mathbb{C}^n$}\label{sec:Cn}

The previous sections call for a deeper look into the singularities that can arise from Cauchy's problem for the Laplace equation.
Certain techniques can only be applied if the problem is ``complexified''.
According to the algebraic form of the initial surface and data, we can allow each variable to assume complex values.
We then consider the Cauchy problem in $\mathbb{C}^n$ where the original, physical problem becomes a relatively small slice.
We can loosely describe the advantage of a $\mathbb{C}^n$-viewpoint as follows:

Consider first the wave equation in $\mathbb{R}^n$.  
If the initial surface is non-singular and algebraic and the data is real-entire, then where can the solution have singularities?
A singularity can propagate to some point if the backwards light-cone from this point is tangent to the initial surface.
The same is true, at least heuristically, for the Laplace equation, 
except the ``light cone'' emanating from a point ${\bf x_0}$ is the \emph{isotropic cone}
$:=\{ \sum_{i=1}^n{(z_i-x^0_i)^2}=0 \}$, residing in $\mathbb{C}^n$ 
and only touching $\mathbb{R}^n$ at ${\bf x_0}$.
Thus, the initial source of the singularity is located on the part of the complexified surface only visible if the problem is lifted to $\mathbb{C}^n$.

``Leray's principle'' gives the general, precise statement of the above description of propagation of singularities.
It is only known to be rigorously true in a neighborhood of the initial surface.
In two dimensions, where the Schwarz potential can be calculated easily, one can check examples to see if Leray's principle gives correct global results (it seems to).
At the same time, this gives an appealing geometric ``explanation'' of the source of singularities 
and reveals that they are the ``foci'' of the curve in the sense of Pl\"ucker (see \cite[Section 1]{Johnsson} and the references therein).

In arbitrary dimensions, G. Johnsson has given a global proof \cite{Johnsson} of Leray's principle for quadratic surfaces.
As Johnsson points out, a major step in the proof relies on the fact that the gradient of a quadratic polynomial is linear, 
so that a certain system of equations can be inverted easily. 
This becomes much more difficult for surfaces of higher degree, indeed, perhaps prohibitively difficult even for specific examples.

In this section we consider a family of surfaces of degree four, 
the surfaces of revolution generated by the Neumann ovals from Example 2 in Section \ref{sec:plane}.
Leray's principle gives an appealing geometric ``explanation'' for the singularities of the Schwarz potential in this example,
but for the rigorous proof, we apply an ad hoc combination of other $\mathbb{C}^n$ techniques (actually $\mathbb{C}^2$, after taking into account axial symmetry).

We require the following two local extension Theorems.

\begin{thm}[Zerner]\label{Zerner}
 Let $v$ be a holomorphic solution of the equation $L v = 0$ in a domain $\Omega \subset \mathbb{C}^n$ with $C^1$ boundary,
and assume that the coefficients of $L$ are holomorphic in $\overline{\Omega}$.
Let $z_0 \in \partial \Omega$.
If $\partial \Omega$ is non-characteristic at $z_0$ with respect to $L$ then $v$ extends holomorphically into a neighborhood of $z_0$.
\end{thm}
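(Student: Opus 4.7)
The plan is to reduce the conclusion to a sweep of Cauchy--Kowalevskaya (CK) applications along complex hyperplanes that advance outward from inside $\Omega$ across $z_0$.

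Translate so $z_0 = 0$, and let $\xi_0 \in \mathbb{C}^n$ denote the complex conormal $(\partial\rho/\partial z)|_0$ of a $C^1$ defining function $\rho$ for $\partial\Omega$. By hypothesis $\sigma_L(0,\xi_0)\ne 0$, so after a linear holomorphic change of coordinates we may take $\xi_0=(0,\ldots,0,1)$. Then in a polydisc $V$ around $0$ the equation $Lv=0$ takes Kowalevskaya normal form
\begin{equation*}
\partial_{z_n}^m v = \sum_{|\alpha|\le m,\; \alpha_n<m} a_\alpha(z)\,\partial^\alpha v,
\end{equation*}
with $a_\alpha$ holomorphic on $\overline V$, and the real defining function becomes $\rho = 2\,\mathrm{Re}(z_n) + O(|z|^2)$. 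For any small $\epsilon>0$ the complex hyperplane $\Sigma_\epsilon := \{z_n = -\epsilon\}$ meets $\Omega\cap V$ in an open $(n-1)$-disc on which $v$ and its $z_n$-derivatives of orders $\le m-1$ are holomorphic, furnishing analytic Cauchy data for the normal-form equation.

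Applying CK to these data yields a holomorphic $\widetilde v$ solving $L\widetilde v=0$ on a polydisc straddling $\Sigma_\epsilon$; by Holmgren uniqueness, $\widetilde v\equiv v$ on the connected component of its domain intersected with $\Omega$, so $\widetilde v$ continues $v$ holomorphically across the slice. To reach $z_0$ I would iterate this: consider the family $\Gamma^t := \{\rho = t\}$ foliating a neighborhood of $0$, and let $T$ be the set of $t \in \mathbb{R}$ for which $v$ admits a holomorphic extension to a neighborhood of $\overline{\{\rho<t\}\cap V}$ near $0$. Then $T$ contains some $t<0$ by the single-step CK above; $T$ is open because one can repeat the CK step at each point of $\Gamma^t$ near $0$ (the non-characteristic condition persists on a compact piece of $\partial\Omega$ around $z_0$ by continuity of $\sigma_L$ and of the conormal); and $T$ is closed because a monotone nested family of holomorphic extensions with uniform sup bounds on a fixed compact region glues to a holomorphic limit. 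Hence $T$ fills a real interval crossing $0$, and $v$ extends past $z_0$.

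The main technical obstacle is securing a uniform positive lower bound on the CK radius as the sweep advances. The classical majorant method produces a radius controlled by sup-norm bounds on the coefficients $a_\alpha$ and on the Cauchy data over a polydisc transverse to the current slice. The coefficient bounds are uniform on $\overline V$ by hypothesis; the Cauchy-data bounds are secured by always choosing CK base points $z_*$ on each slice at a fixed positive distance from the currently extended boundary, rather than as close to $z_0$ as possible, giving $v$ a uniformly-sized polydisc of holomorphicity around $z_*$. Combined with a Heine--Borel cover of the compact advance region near $0$, this yields a uniform CK increment per step, so the sweep closes up in finitely many applications and produces the desired extension.
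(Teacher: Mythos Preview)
The paper does not prove Zerner's theorem; it is stated as a known result and cited to Zerner's original note \cite{Zerner}, then used as a black box in the proof of Theorem~\ref{Neumann}. So there is no ``paper's own proof'' against which to compare your attempt.

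That said, a brief comment on your argument. Your instinct to normalize coordinates and invoke Cauchy--Kowalevskaya on a complex hyperplane inside $\Omega$ is exactly the mechanism behind Zerner's proof. However, the classical argument does not require an open--closed sweep: a \emph{single} application of CK suffices. The point is that the CK radius of existence depends only on sup-norm bounds for the coefficients (uniform on $\overline V$) and on the radius of the polydisc in $\Sigma_\epsilon$ on which the Cauchy data are holomorphic. Because $\partial\Omega$ is $C^1$, as $\epsilon\to 0$ the distance from $\Sigma_\epsilon$ to $z_0$ shrinks to zero while the tangential extent of $\Sigma_\epsilon\cap\Omega$ (hence the data polydisc) stays bounded below. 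So for $\epsilon$ small enough the CK solution already covers $z_0$, and one is done in one step. Your iterative version introduces an unnecessary difficulty: the ``closed'' part of your open--closed argument asserts uniform sup bounds on the successive extensions, but you have not said where these bounds come from, and in general a nested union of holomorphic extensions need not have a common bound without further work. This gap is avoidable precisely because the sweep is not needed.
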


In order to define \emph{non-characteristic} for a real hypersurface given by the zero set of $\phi$, suppose the polynomial $P({\bf x},\nabla)$ expresses the leading order term of $L$.
Then $\Gamma$ is characteristic at $p$ if $P({\bf x},\nabla \phi)$ vanishes at ${\bf x}=p$.
For instance, if $L$ is the Laplacian then the condition for $\{\phi=0 \}$ to be characteristic is $\sum_{i=1}^n {\phi_{x_i}^2} = 0$.

In the statement of the next theorem, $M$ is a hypersurface (of \emph{real} codimension one) dividing a domain $\Omega$ into $\Omega_+$ and $\Omega_-$, 
and $X$ is a holomorphic hypersurface (of \emph{complex} codimension one).

\begin{thm}[Ebenfelt, Khavinson, Shapiro]\label{EKS}
Assume that $M$ is non-characteristic for $L$ at $p_0 \in M$, 
and that the holomorphic hypersurface X is non-singular at $p_0$ and meets $M$ transversally at that point.
Then any holomorphic solution $v$ in $\Omega_{-}$ of
$P(Z,D)v=0$
extends holomorphically across $p_0$.
\end{thm}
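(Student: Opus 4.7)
The plan is to combine Zerner's local extension theorem with a Hartogs-type continuity argument along the transverse complex hypersurface $X$, which serves as a ``bridge'' across $M$ at the distinguished point $p_0$.

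First, I would introduce local holomorphic coordinates $(z_1,\ldots,z_n)$ centered at $p_0$ in which $X=\{z_1=0\}$. Transversality of $X$ and $M$ at $p_0$ then means that a real-analytic defining function $\rho$ for $M$ near $p_0$ has a nonzero gradient component in the direction of $X$, and in particular $X$ contains points from both $\Omega_+$ and $\Omega_-$ in every neighborhood of $p_0$.

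Second, I would apply Zerner's theorem (Theorem \ref{Zerner}) at all points of $M$ in a small neighborhood $V$ of $p_0$. Since non-characteristicness is an open condition, it propagates from $p_0$ to a full $M$-neighborhood, yielding a holomorphic extension of $v$ to an open set $U$ containing $V$. The remaining task is to bridge the possibly missing wedge on the $\Omega_+$ side that is not yet covered by $U$.

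Third, I would invoke transversality to run a Hartogs-type extension. In the chosen coordinates, the portion of $X$ near $p_0$ that lies in $U\cup\Omega_-$ is a punctured neighborhood of $p_0$ inside the complex hyperplane $\{z_1=0\}$, so the complement (inside a small polydisc around $p_0$) of the region on which $v$ is known to be holomorphic has complex codimension at least two. The Hartogs--Kugelsatz theorem, or equivalently a family of analytic discs sweeping out the remaining wedge with boundaries contained in the known domain of holomorphy of $v$, then forces $v$ to extend across $p_0$.

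The main obstacle is controlling the geometry of the Zerner extension domain $U$ near $p_0$: while Zerner gives extension across each individual non-characteristic point, the ``thickness'' of $U$ could in principle degenerate at $p_0$, and one must use the transversality of $X$ to $M$ to guarantee that the uncovered set is thin enough --- ideally, supported in an analytic set of complex codimension $\geq 2$ --- for the Hartogs-type argument to apply. The transversality assumption is precisely what makes this arrangement geometrically possible.
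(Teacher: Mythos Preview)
The paper does not prove this theorem; it is quoted from the literature (reference \cite{EKS}, Ebenfelt--Khavinson--Shapiro) and used as a black box in Case~2 of the proof of Theorem~\ref{Neumann}. There is therefore no ``paper's own proof'' to compare your proposal against.

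That said, your sketch has an internal tension worth flagging. You begin by invoking the hypothesis that $M$ is non-characteristic at $p_0$ and, by openness, on a full $M$-neighborhood of $p_0$; you then apply Zerner's theorem at those nearby points. But under that same hypothesis Zerner already applies at $p_0$ itself, so the Hartogs-type bridging argument you outline in Step~3 would be superfluous. The genuine content of the Ebenfelt--Khavinson--Shapiro result --- visible from how it is invoked in the paper --- is to rescue extension at a point where Zerner \emph{fails}: in the application, the leading coefficient of the transformed operator vanishes along the holomorphic hypersurface $X=\{\xi=\eta\}$, so that the symbol degenerates and $\partial\Omega_T$ is effectively characteristic at $p_0\in X\cap M$. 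Your plan does not engage with that degeneration of the operator along $X$; it treats $X$ only as a geometric scaffold for a Hartogs argument, whereas in the actual theorem $X$ is where the analytic difficulty is concentrated. If you wish to reconstruct a proof, you should consult \cite{EKS} for the precise hypotheses (the statement recorded here is abbreviated) and for the argument, which is more delicate than a straight Zerner-plus-Hartogs combination.
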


\begin{thm}\label{Neumann}
 Let $W({\bf x})$ be the Schwarz potential of the boundary $\Gamma$ of the domain 
$\Omega:=\{{\bf x} \in \mathbb{R}^n : (\sum_{i=1}^{n}{x_i^2})^2 - a^2\sum_{i=1}^{n}{x_i^2} - 4x_1^2  < 0\}$.
Then $W$ can be analytically continued throughout $\Omega \setminus B$ where $B$ is the segment $\{ x_1 \in [-1,1], x_j = 0 \text{ for } j = 2,..,n \}$. 
\end{thm}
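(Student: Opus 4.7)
The plan is to use axial symmetry to reduce the problem to two complex variables, identify the candidate singular set via the Karp procedure of Section \ref{sec:r4}, and propagate a local extension using Theorem \ref{Zerner} and Theorem \ref{EKS} along a globalizing family of complex hypersurfaces.

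Since the defining polynomial $\Phi$ and the Cauchy data $\frac{1}{2}\|{\bf x}\|^2$ are invariant under $SO(n-1)$ acting on the last $n-1$ coordinates, we have $W({\bf x}) = U(X, Y)$ with $X = x_1$ and $Y = (x_2^2 + \cdots + x_n^2)^{1/2}$, where $U$ solves a Cauchy problem for the axially symmetric Laplacian on the real profile curve $\gamma = \{\phi = 0\}$ with $\phi(X, Y) = (X^2 + Y^2)^2 - a^2(X^2 + Y^2) - 4X^2$. Complexify to obtain $\tilde\gamma \subset \mathbb{C}^2$. It suffices to show that $U$, initially defined in a real neighborhood of $\gamma$, extends as a holomorphic function in $\mathbb{C}^2$ to a neighborhood of each point of $\Omega_{2D} \setminus ([-1,1] \times \{0\})$, where $\Omega_{2D}$ is the planar region bounded by $\gamma$ in the half-plane $Y \geq 0$. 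The candidate singular set is located by the Karp procedure: the planar Schwarz function of $\gamma$ has simple poles at $z = \pm 1$, and the antidifferentiation step produces a logarithm whose branch cut runs along $[-1, 1]$.

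For the rigorous argument, fix $p \in \Omega_{2D} \setminus B_{2D}$ and construct a one-parameter family $\{\Sigma_s\}_{s \in [0, 1]} \subset \mathbb{C}^2$ of complex algebraic hypersurfaces such that $\Sigma_0$ lies in a known tubular neighborhood of $\tilde\gamma$ where $U$ is already defined, $p \in \Sigma_1$, each $\Sigma_s$ is non-characteristic for the Laplacian away from a finite exceptional set, and each $\Sigma_s$ meets $\tilde\gamma$ transversally off $B_{2D}$. Theorem \ref{Zerner} then propagates the holomorphy of $U$ across each non-characteristic point of $\Sigma_s$, Theorem \ref{EKS} handles the transverse crossings with $\tilde\gamma$, and a continuity argument in $s$ extends $U$ to a neighborhood of $\Sigma_1$, and hence to $p$. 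The key characteristic calculation, obtained by substituting the relation $X^2 = R(R - a^2)/4$ (with $R = X^2 + Y^2$) valid on $\tilde\gamma$ into $\phi_X^2 + \phi_Y^2$, yields
\begin{equation*}
  (\phi_X^2 + \phi_Y^2)|_{\tilde\gamma} = 4(X^2 + Y^2)\bigl[ 2(X^2 + Y^2)(a^2 + 2) - a^2(a^2 + 4) \bigr],
\end{equation*}
whose only real zero on $\tilde\gamma$ is the self-intersection at the origin (the other zeros having purely imaginary $X$-coordinate); this confirms that no real characteristic point of $\tilde\gamma$ lies outside $B$.

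The main obstacle is the explicit construction of a family $\{\Sigma_s\}$ with these three properties on the full domain, in particular steering around the complex characteristic points of $\tilde\gamma$ from which the branch cut emanates. A secondary subtlety is that the axially symmetric Laplacian has a singular coefficient at $Y = 0$, so Theorem \ref{Zerner} must either be applied off the axis or, for $n = 4$, combined with the auxiliary harmonic function $V = YU$ as in Section \ref{sec:r4}; the singular point $(0, 0)$ of $\tilde\gamma$ lies in $B$ but also on $\partial \Omega$ and requires a separate local argument, for instance by blowup.
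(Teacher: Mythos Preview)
Your outline matches the paper's strategy at the architectural level --- axial reduction, complexification to $\mathbb{C}^2$, and a globalizing family pushed forward with Theorems \ref{Zerner} and \ref{EKS} --- but it is not yet a proof, and you say so yourself: ``The main obstacle is the explicit construction of a family $\{\Sigma_s\}$\ldots''. That obstacle is precisely where the paper does something you do not. After passing to null coordinates $z=X+iY$, $w=X-iY$, the paper composes with the conformal map
\[
f(\xi)=\frac{(R^4-1)\xi}{R(R^2-\xi^2)}
\]
from the unit disk to the Neumann oval, setting $z=f(\xi)$, $w=f(\eta)$. This trades a complicated PDE on a simple surface for a simple surface on which the geometry is transparent: the initial data now sit on $\{\xi\eta=1\}$, the characteristic points become the finite explicit list $(\pm1,\pm1)$, $(\pm R,\pm 1/R)$, $(\pm 1/R,\pm R)$, and the globalizing families are built concretely as convex hulls of a tubular neighborhood of an arc of $\{\xi\eta=1\}$ together with a shrinking radial segment toward the origin. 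The open--closed argument in the parameter then goes through cleanly, with Zerner off the axis and EKS on it. Without this uniformization your family $\{\Sigma_s\}$ remains hypothetical, and the characteristic locus of the transformed operator (which is what actually governs where Zerner fails) is never pinned down.

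Two smaller points. First, your characteristic computation $\phi_X^2+\phi_Y^2|_{\tilde\gamma}$ locates the characteristic points of the \emph{original} Laplacian on the profile curve, but once you clear denominators in the axially symmetric operator the leading symbol picks up extra factors, and after any further change of variables it changes again; the paper's list of characteristic points comes from the leading term \emph{after} the conformal substitution, equation (\ref{PDE}), and those are the ones that matter for the continuation argument. Second, the origin is an isolated (acnodal) point of the algebraic variety $\{\phi=0\}$ and lies in the \emph{interior} of $\Omega$, not on $\partial\Omega$; it sits inside $B$ and needs no separate blowup. The axis itself is handled, as you anticipated, by noting that $W$ is already real-analytic near the real initial surface (so $v$ is holomorphic near the on-axis characteristic points $(\pm1,\pm1)$) and then invoking Theorem \ref{EKS} for the transverse crossing of $\{\xi=\eta\}$ along the radial family.
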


\emph{Remark (i):} In the plane, it is easily seen that $W$ is only singular at the endpoints of the segment (see Example 2 in Section \ref{sec:plane}).
In $\mathbb{R}^4$, it is an example done by L. Karp \cite{Karp}, who showed that
the Schwarz potential has two fundamental solution type singularities at the endpoints along with a uniform jump in the gradient across the segment.

\emph{Remark (ii):} The three-dimensional consequence of this theorem is 
that if we take the surfaces of revolution generated by the Neumann ovals in Example 2 from Section \ref{sec:plane} 
then the resulting evolution is a ``Laplacian growth'' generated by a pressure function 
having some distribution of singularities confined to the segment $\{x \in [-1,1], y=0, z=0 \}$.
This driving mechanism is still rather obscure though, so in the next section we describe an approximation by finitely many simple sinks.

\begin{proof}
We first recall that $W({\bf x})$ is real-analytic in a neighborhood of each nonsingular point of the initial surface (in $\mathbb{R}^n$).
Indeed, if the surface is nonsingular, $\nabla \Phi|_{\Gamma} \neq {\bf 0}$ so that $||\nabla \Phi||^2 |_{\Gamma} \neq 0$ 
so that $\Gamma$ is everywhere non-characteristic (in $\mathbb{R}^n$) for Laplace's equation
and the Cauchy-Kovalevskaya Theorem applies.

Next we write $W(x_1,x_2,...,x_n) = u(x,y)$ where $x=x_1$ and $y=\sqrt{x_2^2+x_3^2+...+x_n^2}$, 
and we recall the axially-symmetric reduction of Laplace's equation: $\Delta u + \frac{(n-2)u_y}{y}=0$. 
Since $u$ solves a Cauchy problem for which the data and boundary are analytic, 
the problem can be lifted to $\mathbb{C}^2$.
So $u(x,y)$ can be viewed as the restriction to $\mathbb{R}^2$ of the solution $u(X,Y)$, 
valid for $X$ and $Y$ each taking complex values.

We make the linear change of variables $X=\frac{z+w}{2}$, $Y=\frac{z-w}{2i}$: 
$u_{z w}+\frac{(n-2)(u_{z}-u_{w})}{z-w}=0$.
Next we make another change of variables $z=f(\xi)$, $w=f(\eta)$, using the conformal map
$$f(\xi)=\frac{(R^4-1) \xi}{R(R^2 - \xi^2)}$$ 
from the unit disk to the profile of $\Omega$ (for appropriate value of $R$) which is Neumann's oval (see Figure \ref{NeumConf}).

\begin{figure}[h]
    \includegraphics[scale=.5]{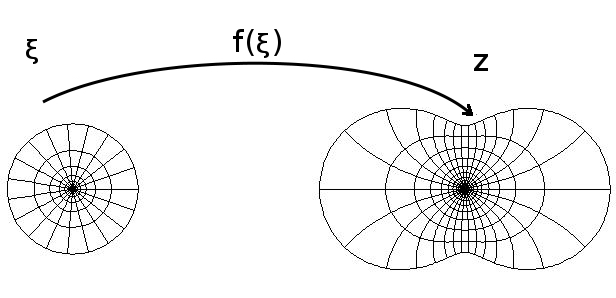}
    \caption{The conformal map from the disc to the Neumann oval.  This simplifies the $\mathbb{C}^2$ geometry but makes the PDE more complicated.}
    \label{NeumConf}
\end{figure}

Write $v(\xi,\eta)= u(f(\xi),f(\eta))$.  Then $v_{\xi} = u_{z}(f(\xi),f(\eta))\cdot f'(\xi)$, 
and the equation satisfied by $v$ is 
$\frac{v_{\xi \eta}}{f'(\xi) f'(\eta)}+\frac{n-2}{f(\xi)-f(\eta)}\left(\frac{v_{\xi}}{f'(\xi)}-\frac{v_{\eta}}{f'(\eta)} \right)=0$,
or $(f(\xi)-f(\eta)) v_{\xi \eta} + (n-2)\left( f'(\eta) v_{\xi} - f'(\xi) v_{\eta}\right)=0$. 
Upon clearing denominators, the leading-order term is
\begin{equation}\label{PDE}
\frac{(R^4-1)}{R}(R^2 - \xi^2)(R^2 - \eta^2) \left( \xi(R^2 - \eta^2) - \eta(R^2 - \xi^2) \right) v_{\xi \eta}.
\end{equation}

After these transformations, we arrive at a Cauchy problem posed on $\{ \xi \eta = 1 \}$,
with data $v = 1/2f(\xi)f(\eta)$, $v_{\xi} = f(\eta)f'(\xi)$, and $v_{\eta} = f(\xi)f'(\eta)$.
According to the form of the leading-order term \ref{PDE}, the characteristic points of $\{ \xi \eta = 1 \}$ are 
$(\pm 1, \pm 1)$, $(\pm R,\pm 1/R)$, $(\pm 1/R,\pm R)$.

The restriction of $v$ to the non-holomorphic set $\eta=\bar{\xi}$ corresponds to the original problem.
Since $W({\bf x})$ was observed to be analytic near the initial surface, 
$v(z,w)$ is analytic in a $\mathbb{C}^2$ neighborhood of the circle $\{\xi \eta = 1, \eta = \bar{\xi} \}$,
even at the characteristic points $(\pm 1, \pm 1)$ on the axis of symmetry.
We analytically continue $v$ from each point on this circle along a radial path toward the origin.
Let $P(\theta) = (e^{i\theta},e^{-i \theta})$.
We consider two cases.
For the first case, $\theta \neq 0$ and $\neq \pi$, and $v$ can be continued up to the origin.
For the second case, when $\theta = 0$ or $= \pi$, the analytic continuation stops at $(1/R,1/R)$ and $(-1/R,-1/R)$ respectively.
Thus, $v$ can be analytically continued to the disk minus the segment joining these two points.
This transforms (by inverting the conformal map) to the statement we are trying to prove about $W$.
For each case we construct a globalizing family in a similar manner to the proof of the Bony-Shapira Theorem \cite{Bony}.

CASE 1: Suppose $\theta \neq 0$ and $\theta \neq \pi$ so that $(e^{i\theta},e^{-i \theta})$ is not on the axis of symmetry.
Let $0<s<1$ be arbitrary.  
We establish the continuability of $v$ to a neighborhood of the segment $\{tP(\theta), s \leq t \leq 1\}$.
Consider the path $\gamma_{\theta} := \{(re^{i\theta},\frac{1}{re^{i \theta}}), s \leq r \leq \frac{1}{s} \}$ which is on the initial surface $\{\xi \eta = 1 \}$,
and passes through none of the characteristic points.
Thus, by the Cauchy-Kovalevskaya Theorem, $v$ is analytic in a neighborhood of each point on $\gamma_\theta$.
Choose $\varepsilon_1 >0$ small enough so that $v$ is analytic in a $\varepsilon_1$-neighborhood of $\gamma_{\theta}$.
Let $\Omega_0$ denote this tubular ($\mathbb{C}^2$) domain of analyticity.

For $1 \geq T \geq s$, let $N_{\varepsilon_2}(T)$ denote the $\varepsilon_2$-neighborhood of the segment $\{tP(\theta), T \leq t \leq 1\}$.
Since for each $1 \geq t \geq s$, the characteristic lines through $t P(\theta)$ also intersect $\gamma_{\theta}$, then for a small enough $\varepsilon_2$, 
any characteristic line that intersects $N_{\varepsilon_2}(T)$ also intersects $\Omega_0$.
Let $\Omega_T$ be the set $\left(\text{co}(\Omega_0 \cup N_{\varepsilon_2}(T)) \setminus \overline{ \text{co}(\Omega_0) }\right) \cup \Omega_0$,
where $\text{co}(S)$ denotes the convex hull of a set $S$.

\begin{claim}
 For points on $\partial \Omega_T \setminus \partial \Omega_0$, the tangent plane is a supporting hyperplane for $\Omega_T$.
\end{claim}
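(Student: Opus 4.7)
The plan is to leverage convexity of $C_T := \text{co}(\Omega_0 \cup N_{\varepsilon_2}(T))$. By construction,
$$\Omega_T \;=\; \Omega_0 \,\cup\, \bigl(C_T \setminus \overline{\text{co}(\Omega_0)}\bigr) \;\subseteq\; C_T,$$
so any hyperplane that supports $C_T$ automatically supports $\Omega_T$. The proof will proceed in two steps: first, show that $\partial \Omega_T \setminus \partial \Omega_0 \subseteq \partial C_T$; second, use convexity of $C_T$ to finish.

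For the first step, suppose $p \in \partial\Omega_T \setminus \partial\Omega_0$, and I want to rule out $p \in C_T^{\circ}$. If $p$ lay in the interior of $C_T$, some open ball $B$ about $p$ would sit entirely inside $C_T$, and the only points of $B$ missing from $\Omega_T$ would be those in $\overline{\text{co}(\Omega_0)} \setminus \Omega_0$. A careful topological argument then shows these excluded points cannot accumulate to produce a genuine boundary of $\Omega_T$ interior to $C_T$ away from $\partial\Omega_0$: roughly, $\overline{\text{co}(\Omega_0)}$ has been excised in the definition of $\Omega_T$ precisely so that its outer boundary merges with $\partial\Omega_0$, modulo protrusions controlled by the smallness of $\varepsilon_1$. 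Thus $p \in \partial C_T$.

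For the second step, take such a $p \in \partial C_T$ at which $\partial\Omega_T$ admits a tangent plane $\pi$. Since $\pi$ is tangent to $\partial\Omega_T$, the set $\Omega_T$ lies locally on one closed side of $\pi$. But $\Omega_T$ contains the open set $C_T \setminus \overline{\text{co}(\Omega_0)}$, which meets every neighborhood of $p \in \partial C_T$; hence an open piece of $C_T$ lies on one side of $\pi$, and by convexity the whole of $C_T$ must lie on that same side. Therefore $\pi$ is a supporting hyperplane of $C_T$, and since $\Omega_T \subseteq C_T$, it also supports $\Omega_T$, which is the claim.

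The main obstacle is the first step: ruling out ``interior'' boundary points of $\Omega_T$ arising from portions of $\partial\overline{\text{co}(\Omega_0)}$ that could in principle protrude into $C_T^{\circ}$. This is a purely geometric question about the thin tubular neighborhood $\Omega_0$ of the curve $\gamma_\theta$ and its arrangement relative to $N_{\varepsilon_2}(T)$; once settled, the convexity argument in the second step is short and entirely elementary.
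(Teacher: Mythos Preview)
Your approach is essentially the same as the paper's: both use the inclusion $\Omega_T \subset C_T := \text{co}(\Omega_0 \cup N_{\varepsilon_2}(T))$ together with convexity of $C_T$, and both identify the key geometric point as showing that $\partial\Omega_T \setminus \partial\Omega_0$ lies on $\partial C_T$. The paper's proof is terse and asserts this step (``these two sets share a boundary near points $p \in \partial\Omega_T \setminus \partial\Omega_0$'') just as you do.

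There is, however, a small slip in your second step. You write ``Since $\pi$ is tangent to $\partial\Omega_T$, the set $\Omega_T$ lies locally on one closed side of $\pi$,'' but tangency alone does not give local one-sidedness (think of $y=x^3$ at the origin). The paper avoids this by arguing something slightly stronger in step one: not merely $p \in \partial C_T$, but that $\partial\Omega_T$ and $\partial C_T$ \emph{coincide} near $p$. Then the tangent plane to $\partial\Omega_T$ at $p$ is literally the tangent plane to $\partial C_T$ at $p$, and for a convex body the tangent plane at a smooth boundary point is automatically a supporting hyperplane. Your argument is easily repaired by strengthening your first step in the same way, after which the ``locally on one side'' assertion becomes unnecessary.
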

\begin{proof}[proof of Claim]
By definition, $\Omega_T \subset \text{co}(\Omega_0 \cup N_{\varepsilon_2}(T))$, and these two sets share a boundary near points $p \in \partial \Omega_T \setminus \partial \Omega_0$.
The tangent plane at $p \in \partial \Omega_T \setminus \partial \Omega_0$ is also a tangent plane for $\partial \text{co}(\Omega_0 \cup N_{\varepsilon_2}(T))$.
By convexity, it must be a supporting hyperplane for $\text{co}(\Omega_0 \cup N_{\varepsilon_2}(T))$.  
It is then also a supporting hyperplane for the subset $\Omega_T$.
\end{proof}

Let $E:=\{T: v$ can be analytically continued to $\Omega_T \}$.
Since $1 \in E$, $E$ is non-empty.
We will show that $E$ is both open and closed relative to $[s,1]$ and is therefore equal to $[s,1]$.
The fact that $E$ is closed follows from the fact that the domains $\Omega_T$ are continuous and nested.
To see that $E$ is open, we apply Zerner's Theorem.
Suppose $T \in E$, i.e., $v$ extends to $\Omega_T$.
By the Claim, the tangent plane $P$ to $\Omega_T$ at $p \in \partial \Omega_T \setminus \partial \Omega_0$ is a supporting hyperplane.
We must have that $P$ passes through $N_{\varepsilon_2}(s)$.
Otherwise, $P$ is a supporting hyperplane for both $\Omega_0$ and $N_{\varepsilon_2}(s)$
and, therefore, for any segment joining points in each of these sets (a contradiction).
Since $P$ passes through $N_{\varepsilon_2}(s)$ and not $\Omega_0$, it is non-characteristic.
By Theorem \ref{Zerner}, $v$ extends to a neighborhood of $p$.

CASE 2: Suppose $\theta = 0$ or $\theta = \pi$. 
For specificity, say $\theta = 0$.
Then $\gamma_0:= \{(r,\frac{1}{r}), s \leq r \leq \frac{1}{s} \}$ passes through the characteristic point $(1,1)$.
We have already observed, though, that $v$ is analytic in a neighborhood of the point $(1,1)$.
If $s \leq 1/R$, then $\gamma_0$ also passes through the characteristic points $(R,1/R)$, and $(1/R,R)$.
So, we let $s > 1/R$.  Then we can still choose an $\varepsilon_1 >0$ small enough that $v$ is analytic in a $\varepsilon_1$-neighborhood of $\gamma_0$.
We use $\Omega_0$ again to denote this domain of analyticity.
We can proceed in the same way as in the previous case, defining $N_{\varepsilon_2}(T)$ and $\Omega_T$,
except now the axis of symmetry $z=w$ intersects the advancing boundary of $\Omega_T$ for every value of $T$.
Zerner's Theorem fails at this point of intersection, 
but Theorem \ref{EKS} applies since the complex line $z=w$ is transversal to each of the boundaries $\partial \Omega_T$.
Thus, we can again prove that the set $E$ is open and closed relative to $[s,1]$,
but recall that we assumed $s > 1/R$.
\end{proof}

The method of proof can clearly be applied to other examples having axial symmetry.
In a future study, we hope to apply $\mathbb{C}^n$ techniques to some surfaces of degree four that do not have axial-symmetry.
For now, we state as a conjecture what we expect for one such example (for simplicity we formulate it in $\mathbb{R}^3$).

\begin{conj}
 Let $W({\bf x})$ be the Schwarz potential of the boundary $\Gamma$ of the domain 
$\Omega:=\{(x,y,z) \in \mathbb{R}^3 : (x^2 + y^2 + z^2)^2 - (a^2x^2 + b^2y^2 + c^2z^2) < 0\}$, with $a>b>c>0$.
Then $W$ can be analytically continued throughout $\Omega \setminus B$ where $B:=\{z=0,(x^2+y^2)^2 - \frac{x^2}{4(a^2-c^2)} - \frac{y^2}{4(b^2-c^2)} < 0 \}$. 
\end{conj}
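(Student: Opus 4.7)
The plan is to mimic the strategy of Theorem \ref{Neumann}, but working directly in $\mathbb{C}^3$ since axial symmetry is no longer available. The argument has three stages: identify $B$ as a Leray-style focal locus, complexify the Cauchy problem, and for each candidate regular point $p_0 \in \Omega \setminus B$ build a globalizing family of domains connecting a $\mathbb{C}^3$-neighborhood of $\Gamma$ out to $p_0$, with Theorems \ref{Zerner} and \ref{EKS} controlling the advancing boundary.

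First I would verify that the region $B$ in the statement is precisely the set of $p \in \Omega$ whose isotropic cone $\{z \in \mathbb{C}^3 : \sum_{i=1}^{3} (z_i - p_i)^2 = 0\}$ is tangent to the complexified quartic $\Gamma_{\mathbb{C}} = \{\Phi = 0\}$, where $\Phi(z) = (\sum z_i^2)^2 - (a^2 z_1^2 + b^2 z_2^2 + c^2 z_3^2)$. Combining $\Phi(z) = 0$, the tangency condition $\nabla \Phi(z) \parallel (z - p)$, and $\sum_i (z_i - p_i)^2 = 0$ yields, after squaring, a $3 \times 3$ linear system in $(z_1^2, z_2^2, z_3^2)$ with Vandermonde-type coefficient matrix in $a^2, b^2, c^2$; eliminating $z$ gives an algebraic equation for $p$. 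A direct computation should show that its real trace inside $\Omega$ is exactly the planar region $B$, with the denominators $a^2 - c^2$ and $b^2 - c^2$ emerging because the shortest semi-axis direction ($c$) is the ``degenerate'' one. This is the Leray-type heuristic justification for the conjectured singular set.

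Next, I would lift the Cauchy problem (\ref{CP}) to $\mathbb{C}^3$. Since $\Gamma$ is nonsingular and $\sum \Phi_{x_i}^2 = \|\nabla \Phi\|^2 \neq 0$ on $\Gamma$, the real surface is non-characteristic, and Cauchy--Kovalevskaya produces a holomorphic solution $v$ on a full $\mathbb{C}^3$-neighborhood of $\Gamma$. Fix $p_0 \in \Omega \setminus B$, choose a real path $\gamma \subset \Omega \setminus B$ from a smooth point of $\Gamma$ to $p_0$, thicken $\gamma$ to a tube $N_\varepsilon(\gamma) \subset \mathbb{C}^3$, and, parameterizing $\gamma$ by $T \in [0,1]$, define a monotone family $\Omega_T$ in the spirit of the proof of Theorem \ref{Neumann}: $\Omega_0$ is the initial Cauchy--Kovalevskaya neighborhood of $\Gamma$, and $\Omega_T$ adjoins the portion of the tube with parameter $\geq T$, passing through convex hulls so that the ``supporting hyperplane'' Claim from that proof persists. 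With $E = \{T : v \text{ extends holomorphically to } \Omega_T\}$, closedness follows from nestedness, and openness reduces to verifying that the advancing tangent hyperplane is a supporting hyperplane and is non-characteristic, after which Theorem \ref{Zerner} produces the extension.

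The main obstacle will be controlling the advancing boundary points where the tangent plane \emph{is} characteristic. In Theorem \ref{Neumann}, after planar conformal reduction, the characteristic set consisted of finitely many isolated points in $\mathbb{C}^2$, and the axis of symmetry $z = w$ provided the transversal complex hypersurface needed for Theorem \ref{EKS}. In $\mathbb{C}^3$ the characteristic varieties are complex surfaces, so their contact with the $5$-real-dimensional advancing boundary is generically a curve, and one must exhibit explicit holomorphic hypersurfaces --- the complexified coordinate planes $\{z_i = 0\}$, preserved by the symmetries $x_i \mapsto -x_i$ of $\Phi$, are the natural candidates --- that meet $\partial \Omega_T$ transversally at every such contact. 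The payoff of the first step is precisely that $p_0 \notin B$ excludes the most dangerous kind of contact, tangency to an isotropic cone based at $p_0$ itself; the remaining ``inessential'' contacts should then be removable by Theorem \ref{EKS}. Carrying out this transversality analysis for a non-symmetric quartic in $\mathbb{C}^3$, and keeping track of which characteristic intersections are truly inessential, is where the bulk of the technical work lies and where the proposal is most likely to require additional geometric input before it can be pushed through rigorously.
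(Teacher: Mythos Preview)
The statement you are attempting to prove is explicitly a \emph{conjecture} in the paper: the author writes ``For now, we state as a conjecture what we expect for one such example'' and ``In a future study, we hope to apply $\mathbb{C}^n$ techniques to some surfaces of degree four that do not have axial-symmetry.'' There is no proof in the paper to compare against; the author regards this as an open problem precisely because the method of Theorem~\ref{Neumann} does not transfer in any routine way once axial symmetry is lost.

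Your proposal is the natural research program one would write down after reading the paper, and you have correctly identified where it stalls. In Theorem~\ref{Neumann} the entire argument is made tractable by the axially-symmetric reduction to $\mathbb{C}^2$ together with the conformal map $f$, which converts the quartic $\Gamma_{\mathbb{C}}$ into the fixed holomorphic surface $\{\xi\eta = 1\}$ and reduces the characteristic locus to a \emph{finite} set of points in $\mathbb{C}^2$. That finiteness is what allows the globalizing family $\Omega_T$ to be built out of convex hulls of tubes, with Zerner's theorem handling all but finitely many boundary points and Theorem~\ref{EKS} disposing of the rest via the single transversal line $z=w$. In your $\mathbb{C}^3$ setting none of these simplifications are available: there is no conformal uniformization, the characteristic variety is a complex surface rather than a discrete set, and the ``supporting hyperplane'' Claim --- which in the paper depends on the one-real-dimensional convex-hull construction in $\mathbb{C}^2$ --- does not obviously survive the passage to convex hulls of tubes around paths in $\mathbb{C}^3$. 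Your suggestion to use the coordinate planes $\{z_i = 0\}$ as the transversal hypersurfaces for Theorem~\ref{EKS} is plausible but would need to be checked at every characteristic contact along every $\partial\Omega_T$, and there is no a priori reason the contacts occur only on those planes. In short, your outline is faithful to the paper's intended strategy, but the gap you flag in your final paragraph is exactly the obstruction that keeps this a conjecture rather than a theorem.
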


In other words, the conjecture says that the singularity set of the Schwarz potential of $\Omega$, a three-dimensional version of the Neumann oval, 
can be confined to a set in the $xy$-plane bounded by a (two-dimensional) Neumann oval.

\section{Quadrature domains}
\label{sec:QD}

In order to limit the number of definitions in the exposition of our main results, we have so far avoided explicit mention of ``quadrature domains'', 
but it would be remiss not to discuss this important connection.  
Also, this will allow us to give a detailed approximate description of the second remark made after the statement of Theorem \ref{Neumann}.

First we consider the plane.  
A domain $\Omega$ is a \emph{quadrature domain} if it admits a formula expressing the area integral of 
any analytic function $f$ belonging to, say $L^1(\Omega)$, as a finite sum of weighted point evaluations of the function and its derivatives.
i.e. $$\int_{\Omega} {f dA} = \sum_{m=1}^{N} \sum_{k=0}^{n_k}{a_{mk}f^{(k)}(z_m)}$$
where $z_i$ are distinct points in $\Omega$ and $a_{mk}$ are constants independent of $f$. 

Suppose $\Omega$ is a bounded, simply-connected domain with non-singular, analytic boundary.
Then the following are equivalent.  
Moreover, there are simple formulas relating the details of each.

(i) $\Omega$ is a quadrature domain. \\
(ii)  The exterior logarithmic potential of $\Omega$ is equivalent to that which is generated by finitely many interior points (allowing multipoles).\\
(iii) The Schwarz function of $\partial \Omega$ is meromorphic in $\Omega$. \\
(iv)  The conformal map from the disk to $\Omega$ is rational.\\

For the equivalence of (i) and (iii), see \cite[Ch. 14]{Davis}.  For the equivalence of (i), (ii), and (iv), see \cite[Ch. 3]{Etingof}.

In higher dimensions, one simply replaces ``analytic'' with ``harmonic'' in the definition of quadrature domain.
In condition (ii), ``logarithmic'' becomes ``Newtonian''. 
In higher dimensions, ``multipole'' refers to a finite-order partial derivative of the fundamental solution to Laplace's equation.
In condition (iii), ``Schwarz function'' becomes ``Schwarz potential'', 
and instead of ``meromorphic'' the Schwarz potential must be real-analytic except for finitely many ``multipoles'' (as described above).
Then the equivalence of (i), (ii), and (iii) persists in higher dimensions (see \cite[Ch. 4]{KhSh}).
Condition (iv) of course does not extend.

If the initial domain of a Laplacian growth is a quadrature domain, then it will stay a quadrature domain 
by virtue of the equivalence of (i) and (iii) combined with Theorem \ref{LGSP}.
Moreover, according to the formulas (omitted here) relating the details of (i) and (iii), 
the consequent time-dependence of the quadrature is the content of Richardson's Theorem.
In the plane, the quadrature domain can be reconstructed from its quadrature formula, and quadrature domains are dense within natural classes Jordon curves; 
the smoother the class, the stronger the topology in which they are dense (see \cite{Bell} and the references therein).

\begin{thm}[Richardson]
\label{Rich}
If $\Omega_t$ is a Laplacian growth with $m$ sinks located at $x_i$ with rates $Q_i$, then for any harmonic function $u$
$$\frac{d}{dt} \int_{\Omega_t}{u dV} = -\sum_{i=1}^{m}{Q_i u(x_i)}$$
\end{thm}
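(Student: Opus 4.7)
My plan is to differentiate under the integral via the Reynolds transport identity and then dispose of the resulting boundary flux using Green's second identity, localizing each sink contribution with the asymptotic prescription from (\ref{LG}). Since $u$ does not depend explicitly on $t$,
\begin{equation*}
\frac{d}{dt}\int_{\Omega_t} u\,dV \;=\; \int_{\Gamma_t} u\, v_n\,dS \;=\; -\int_{\Gamma_t} u\,\partial_n P\,dS,
\end{equation*}
where the second equality uses Darcy's law $v_n = -\partial_n P$ from (\ref{LG}), with $\partial_n$ the outward normal derivative. It remains to evaluate the right-hand side.

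To do so, excise small balls $B_\epsilon(x_i) \subset \Omega_t$ around each sink. On the punctured region $\Omega_t \setminus \bigcup_i \overline{B_\epsilon(x_i)}$, both $u$ and $P$ are harmonic, so Green's second identity yields
\begin{equation*}
\int_{\Gamma_t} \bigl( u\,\partial_n P - P\,\partial_n u \bigr) dS \;=\; \sum_i \int_{\partial B_\epsilon(x_i)} \bigl( u\,\partial_\nu P - P\,\partial_\nu u \bigr) dS,
\end{equation*}
with $\nu$ the outward normal to $B_\epsilon(x_i)$. The $P\,\partial_n u$ term on $\Gamma_t$ vanishes because $P|_{\Gamma_t}=0$. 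On each small sphere, the prescribed singular behavior $P \sim -Q_i K(\mathbf{x}-x_i)$ combined with the defining flux of the fundamental solution $K$ makes $\int_{\partial B_\epsilon(x_i)} u\,\partial_\nu P\,dS$ tend to $Q_i\, u(x_i)$ as $\epsilon \to 0$, while the companion $P\,\partial_\nu u$ contribution scales like $\epsilon$ and hence vanishes. Assembling the pieces gives $\int_{\Gamma_t} u\,\partial_n P\,dS = \sum_i Q_i\, u(x_i)$, which combined with the transport identity above yields the claim.

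The only delicate point is sign bookkeeping: the orientation of $\nu$, the sign convention for the fundamental solution $K$, and the fact that $Q_i>0$ corresponds to a shrinking domain (hence $v_n<0$) must all be tracked consistently so that the final coefficient of $\sum_i Q_i\, u(x_i)$ comes out negative. Alternatively, one could deduce the theorem from Theorem \ref{LGSP} by pairing $\partial_t w = -nP$ with the quadrature-domain identification of Section \ref{sec:QD} relating the Schwarz potential's singularities to the quadrature nodes; but the Green's-identity route above is more elementary and does not require $\Omega_t$ to be a quadrature domain.
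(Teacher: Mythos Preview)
Your proof is correct; it is the classical direct argument via the Reynolds transport identity and Green's second identity, and your handling of the singular contributions at the sinks (including the observation that the $P\,\partial_\nu u$ term is $O(\epsilon)$, resp.\ $O(\epsilon\log\epsilon^{-1})$ in two dimensions) is sound. The sign caveat you flag is genuine but purely bookkeeping.

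The paper, however, does not supply a self-contained proof of this theorem. It states Richardson's result and indicates---in the paragraph immediately preceding the statement, and earlier in Remark~3 following Theorem~\ref{LGSP}---that the theorem is recovered by combining Theorem~\ref{LGSP} ($\partial_t w = -nP$) with the equivalence of conditions (i) and (iii) in the quadrature-domain characterization. In that viewpoint the quadrature weights are read off from the singular part of the Schwarz potential, and their time-derivatives are then dictated by the singular part of $-nP$, which by (\ref{LG}) is exactly $\sum_i Q_i$ times fundamental solutions at the $x_i$. So the paper's route is conceptually the one you mention at the end of your proposal as an alternative.

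The trade-off: your Green's-identity computation is elementary, needs no Schwarz-potential or quadrature-domain machinery, and applies to any harmonic $u$ regardless of whether $\Omega_t$ is a classical quadrature domain. The paper's route, by contrast, embeds Richardson's moment conservation into the geometric statement that interior singularities of $w$ are frozen, which is the organizing principle of the whole paper; it buys a unified picture but at the cost of invoking the (omitted) formulas relating (i) and (iii).
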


If the initial domain is not a quadrature domain, 
then the connection of Theorem \ref{LGSP} to Richardson's Theorem requires defining quadrature domains \emph{in the wide sense},
allowing the quadrature formula to consist of a distribution with compact support contained in $\Omega$ (see \cite{KhSh} and \cite{Shapiro}).
For such generalized quadrature domains, a distribution with minimal support is called a ``mother body'' for the domain.
The singularity set of the Schwarz potential gives a supporting set for the ``mother body''.

Work of Gustafsson and Sakai guarantees existence of a quadrature domain in $\mathbb{R}^n$ satisfying a prescribed quadrature formula,
but besides the special examples in $\mathbb{R}^4$ the only explicit example for $n > 2$ is a sphere.
Moveover, little qualitative information is known about quadrature domains in higher dimensions besides that the boundary is analytic.
For instance, it is not even known whether quadrature domains are generally algebraic (in the plane, it follows from condition (iv).
We make the following conjecture, where we mean ``quadrature domain'' in the classical, restricted sense 
(otherwise the statement is trivial, since any analytic, non-singular surface is a quadrature domain in the wide sense):
\begin{conj}\label{conj:quad}
    In dimensions greater than two, there exist quadrature domains that are not algebraic.
\end{conj}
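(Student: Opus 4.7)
The plan is to exploit the rigidity of algebraic hypersurfaces against the abundance of admissible quadrature data available once $n \geq 3$. In the plane, algebraicity of classical quadrature domains is a consequence of the rational-uniformization property (condition (iv)), which fails in higher dimensions by Liouville's theorem; the conjecture asserts that no substitute mechanism can salvage algebraicity in general. I would try to make this rigorous by a dimension count comparing quadrature data to algebraic hypersurfaces, and back it up with an explicit candidate.

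First I would set up the parameter count. Fix $N$ and $k$ and consider quadrature data consisting of $N$ distinct points $x_1,\ldots,x_N \in \mathbb{R}^n$ with weights $a_{mj}$ for multi-indices $|j|\le k$, subject to the identity
\[
\int_\Omega h\, dV \;=\; \sum_{m=1}^N \sum_{|j|\le k} a_{mj}\, \partial^j h(x_m)
\]
for all integrable harmonic $h$ on $\Omega$. By Gustafsson--Sakai, an open set of such data is realized by honest quadrature domains, and modulo rigid motions the map from data to domains is finite-to-one on a generic locus, so the image is a real-analytic family of controlled dimension $D_{N,k} = N(n+\binom{n+k}{n}) - \dim\mathrm{Iso}(\mathbb{R}^n)$.

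Second, I would try to produce an \emph{a priori} degree bound for algebraic quadrature domains attached to such data. The space of real algebraic hypersurfaces in $\mathbb{R}^n$ of degree $\le d$ has dimension $\binom{n+d}{n}-1$, so if one can bound $d$ in terms of $N$ and $k$ alone, then for fixed $k$ but large $N$ the count $D_{N,k}$ would eventually exceed the algebraic ambient dimension and a Baire / Sard argument would force the existence of a non-algebraic quadrature domain in the family. A natural route to such a degree bound goes through the Schwarz potential: by the higher-dimensional equivalence (i)$\Leftrightarrow$(iii) cited from \cite{KhSh}, algebraicity of $\partial\Omega$ should translate into an algebraic structure for the singular part of $w$, whose pole orders are controlled by $(N,k)$.

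The main obstacle is precisely this degree bound. In the plane, rationality of the Schwarz function makes the bound automatic, but in dimensions $n \geq 3$ the Schwarz potential is only real-analytic with multipole singularities, and the global continuation problem (the theme of Section~\ref{sec:Cn} and the conjectures surrounding it) is exactly what prevents one from passing from local polar data to a global polynomial equation for $\partial\Omega$. To sidestep this, I would fall back on an explicit candidate: take two well-separated small spheres in $\mathbb{R}^3$ and let Gustafsson--Sakai produce the two-point quadrature domain interpolating between them as the separation shrinks. One would then try to rule out algebraicity directly, for instance by combining the axially-symmetric reduction of Section~\ref{sec:r4} (where Karp's procedure gives the singular part of $w$ explicitly) with an analysis showing that the resulting $\partial\Omega$ contains a transcendental arc — the obstruction being that the analog of Step~2 in Karp's recipe introduces a logarithm that no rational Schwarz datum can produce. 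I expect this concrete route to be more tractable than the general parameter count, and success on even a single example would settle the conjecture.
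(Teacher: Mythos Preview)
This statement is a \emph{conjecture} in the paper, not a theorem; the paper offers no proof. What the paper does provide (in Concluding Remark~6) is heuristic evidence based on Karp's axially-symmetric procedure in $\mathbb{R}^4$: to realize a quadrature identity involving only a point evaluation and a single first-order $x$-derivative, the profile curve would apparently need a Schwarz function with an essential singularity at the origin, forcing the conformal map---and hence the curve---to be transcendental. The paper then remarks that $\mathbb{R}^3$ should be at least as bad, since the hypergeometric kernel appearing in the axially-symmetric integral formula is itself transcendental rather than rational.

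Your proposal is likewise a strategy rather than a proof, and you are candid about this. The dimension-counting route has a structural gap you already name: without an \emph{a priori} degree bound $d=d(N,k)$ on algebraic quadrature domains realizing given data, the count proves nothing, because the space of real algebraic hypersurfaces of \emph{all} degrees is infinite-dimensional. There is no known mechanism in $n\ge 3$ to produce such a bound---indeed, the absence of such a mechanism is essentially what the conjecture asserts---so this branch is circular as stated. Your explicit-candidate route is closer in spirit to the paper's heuristic, but two issues arise. First, Karp's three-step procedure from Section~\ref{sec:r4} is specific to $\mathbb{R}^4$ (it relies on $yU$ being harmonic in two variables), not to the $\mathbb{R}^3$ setting you invoke. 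Second, and more seriously, that procedure runs from a \emph{known} profile Schwarz function to the Schwarz potential; to use it for your purpose you would have to start from the \emph{unknown} boundary of the Gustafsson--Sakai two-point domain and argue backwards that no polynomial defining equation exists---which is precisely the open question. Both branches therefore reduce to the same unresolved difficulty the paper flags, and neither closes the gap.
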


For the three-dimensional example from Theorem \ref{Neumann},
we were able to isolate the singularities for the Schwarz potential to a segment inside.
Thus, $\Omega$ is a quadrature domain in the wide sense and has a mother body supported on this segment.
We approximate the distribution using a finite number of points on this segment.
Choosing the points $x_k = -1 + k/2$, $k=0,1,..,4$, we numerically integrate $20$ harmonic basis functions (writing them in terms of Legendre polynomials) over $\Omega$.
If we assume a quadrature formula involving point evalutations at the points $(x_k,0,0)$,
then we have an overdetermined linear system for the coefficients ($20$ equations and $5$ unknowns).
We take two surfaces, and solve the least squares problem for the coefficients (using the same $5$ points).  
Then the two surfaces can be approximately described as the boundaries of initial and final domains driven by sinks at these points, 
where the total amount removed is given by the decrease in quadrature weight.

\begin{figure}[h]
    \includegraphics[scale=.5]{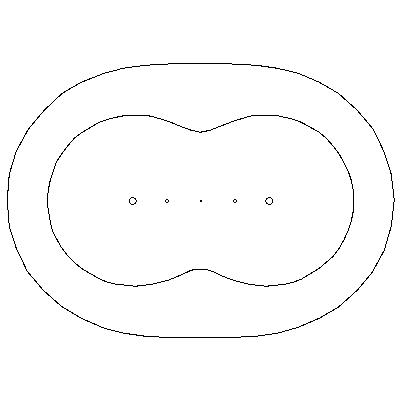}
    \caption{The profile of a supposed initial ($a=1$) and final ($a=2$) domain.  
The driving mechanism to generate the smaller domain starting from the larger can be approximated by certain amounts of suction at the indicated points.}
    \label{Approx}
\end{figure}

Suppose $\Omega_{\text{initial}}$ is given by $a=2$ (see statement of Theorem \ref{Neumann}) and $\Omega_{\text{final}}$ is given by $a=1$.
Then of the total volume extracted, according to the approximate description
$81\%$ is removed at the points $(\pm 1,0,0)$, $15\%$ at the points $(\pm 1/2,0,0)$, 
and $4\%$ at the origin (See Figure \ref{Approx}).
The accuracy of this description is reflected in the fact that the norm of the error vector for both least squares problems is on the order of $10^{-4}$.

\section{Concluding remarks}

{\bf 1.} The equivalent definitions of quadrature domains listed in Section \ref{sec:QD} indicate the possible reformulations of the Laplacian growth problem either
in terms of potential theory or in terms of holomorphic PDEs.
The potential theory approach has attracted more attention and has certain advantages such as weak formulations of Laplacian growth.
We have focused on the holomorphic PDE approach, 
and in Section \ref{sec:Cn} we gave a glimpse of its main advantage: $\mathbb{C}^n$ techniques.

{\bf 2.} The remarks at the end of Section \ref{sec:EG} mention a consequence of the Schwarz potential conjecture regarding Laplacian growth.
It would be interesting if one could obtain a partial result in the other direction along the lines of ``Surfaces satisfying the SP conjecture are preserved by Laplacian growth''.
This would only be interesting in higher dimensions, since the conjecture is already known to be true in the plane.

{\bf 3.} In Section \ref{sec:EG}, the discussion centered around the case when $\alpha = \lambda \rho = 1$ is constant.
It is natural to consider when $\alpha$ is a (fixed) non-constant entire function,
and ask if the solution $q$ to div$(\alpha \nabla q) = 1$ generalizes the data $\frac{1}{2}||{\bf x}||^2$ in the Schwarz potential conjecture.
We make the following ``elliptic Schwarz potential conjecture''.

\begin{conj}\label{conj:genSP}
 Suppose $\alpha$ is entire and that $u$ solves the Cauchy problem on a nonsingular analytic surface for div$(\alpha \nabla u) = 0$ with entire data.
Then the singularity set of $u$ is contained in the singularity set of $v$,
where $v$ solves the Cauchy problem with data $q$ the solution of div$(\alpha \nabla q) = 1$.
\end{conj}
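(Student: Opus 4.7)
The plan is to adapt the strategy underlying the classical Khavinson--Shapiro conjecture, exploiting the fact that $\text{div}(\alpha \nabla \cdot)$ has principal symbol $\alpha(x)|\xi|^2$ and hence the same characteristic variety as Laplace's equation wherever $\alpha \neq 0$. Since $\alpha$ is entire, its complex zero set is a proper analytic subvariety of $\mathbb{C}^n$, and on the complement the discussion of Leray's principle from Section \ref{sec:Cn} transfers verbatim: singularities of both $u$ and $v$ should propagate out from the complexified surface $\Gamma^{\mathbb{C}}$ along the isotropic cones $\{\sum (z_i - x_i^0)^2 = 0\}$, producing a common candidate envelope into which $\text{sing}(u)$ and $\text{sing}(v)$ must fit.

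By linearity, $u - v$ satisfies the same homogeneous equation with entire Cauchy data $(\phi - q, \nabla(\phi - q))$, so showing $\text{sing}(u) \subset \text{sing}(v)$ reduces to extending $u$ analytically to any point at which $v$ is analytic. I would attempt this by constructing globalizing families in the spirit of the proof of Theorem \ref{Neumann}: start with the Cauchy--Kovalevskaya neighborhood of $\Gamma$; enlarge the domain of analyticity by repeated application of Zerner's Theorem \ref{Zerner} at non-characteristic boundary points; and invoke the Ebenfelt--Khavinson--Shapiro Theorem \ref{EKS} to cross transversal holomorphic hypersurfaces (for example, the axis of symmetry) where Zerner alone fails. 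Both theorems are already formulated for general linear operators $L$ with holomorphic coefficients, so they apply here once one checks non-characteristicness via $\alpha \|\nabla \phi\|^2$ rather than $\|\nabla \phi\|^2$.

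The main obstacle is that the classical conjecture ($\alpha \equiv 1$) is itself open for $n \geq 3$; only the planar case and Stern's generic higher-dimensional result \cite{Stern} are established, so a fully unconditional proof of Conjecture \ref{conj:genSP} is not within reach of current techniques. I would therefore aim for three partial goals: (a) a complete proof in $n=2$ via an isothermal/Beltrami reduction of $\text{div}(\alpha \nabla u) = 0$ to a Laplace Cauchy problem, to which the planar classical theorem applies directly; (b) a generic-type theorem in higher dimensions obtained by a homotopy $\alpha_s = (1-s) + s\alpha$ combined with Stern's result and a semicontinuity argument for singular sets; and (c) explicit verification for axially symmetric surfaces by the reduction-of-dimension trick of Section \ref{sec:r4}. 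A further subtle difficulty is that the complex zero set of $\alpha$ may intersect the characteristic envelope of $\Gamma$, potentially producing apparent singularities of $u$ that are not shared by $v$; ruling these out appears to require a careful local analysis of $\text{div}(\alpha \nabla \cdot)$ near zeros of its leading coefficient, and I expect this to be the most delicate step of any complete argument.
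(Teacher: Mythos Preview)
The statement you are attempting to prove is a \emph{conjecture}; the paper offers no proof, and immediately after stating it the author writes: ``the Schwarz potential conjecture is true in the plane and simple to prove, whereas we do not know if Conjecture~\ref{conj:genSP} is true in the plane.'' So there is nothing to compare your argument against, and the correct response is simply that no proof exists in the paper.

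That said, your partial goal (a) deserves a specific comment, since it directly contradicts the author's assessment. The isothermal/Beltrami reduction you propose does produce a change of coordinates $\Phi$ in which $\text{div}(\alpha\nabla\,\cdot\,)$ becomes the flat Laplacian, but it does \emph{not} send the distinguished datum $q$ (solving $\text{div}(\alpha\nabla q)=1$) to $\tfrac{1}{2}|\xi|^2$. In the new coordinates the right-hand side becomes $1/|\det D\Phi|$ rather than a constant, so the transformed $v$ is not the Schwarz potential of the transformed surface and the planar Khavinson--Shapiro theorem does not apply ``directly.'' This is precisely why the author singles out the planar case as open: the easy two-dimensional proof of the classical conjecture runs through the Schwarz function identity $2\partial_z w = S(z)$, which has no analogue once the operator acquires first-order terms. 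Your linearity remark (that $u-v$ satisfies the same homogeneous equation with entire data) is correct but not a reduction---it merely produces another instance of the same conjecture.

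Your goals (b) and (c) are reasonable research directions, but they are programs rather than proofs, and the paper makes no claim that either can be carried out.
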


One might object to generalizing unresolved conjectures. 
We should point out that the Schwarz potential conjecture is true in the plane and simple to prove, 
whereas we do not know if Conjecture \ref{conj:genSP} is true in the plane.
One piece of evidence for the SP conjecture is that the Schwarz potential 
developes singularities at every characteristic point of the initial surface \cite[Proposition 11.3]{DK}.
A similar proof shows that this is also true for $v$, 
where $\{\phi=0\}$ being characteristic for the elliptic operator means $\nabla \alpha \cdot \nabla \phi + \alpha \nabla \phi \cdot \nabla \phi = 0 $.

{\bf 4.} At the end of Section \ref{sec:r4} we have mentioned the fact that ``injection is independent of the order of work of sources and sinks''.
In other words, the Laplacian growths driven by different sources and sinks ``commute'' with eachother.
We can even consider, say hypothetically, injection at each of infinitely many interior points of a domain.
Then we have infinitely many processes that commute with eachother.
This, and especially its infinitesimal version which follows from the Hadamard variational formula, has the form of an ``integrable hierarchy''.
To use the preferred language in this setting, we have a ``commuting set of flows with respect to infinitely many generalized times'' 
(the ``times'' are the amounts that have been injected into each of infinitely many sources).
This holds in arbitrary dimensions 
but has recently attracted attention in two dimensions 
where it is directly connected to certain integrable hierarchies in soliton theory (see \cite{MWZ} and \cite{Teo}).
Aspects of the higher-dimensional case and possible connections to other integrable systems seem completely unexplored.

{\bf 5.} Quadrature domains have also appeared, often only implicitly, in solutions of Euler's equations.
Physically, this area of fluid dynamics is much different, involving inviscid flow with vorticity.
D. Crowdy has given a survey \cite{Crowdy} of his own work and others' (mainly in the two-dimensional case) where quadrature domains have been applied to vortex dynamics.

The ellipsoid is an example of a quadrature domain in the wide sense for which the mother body has been calculated (see \cite[Ch. 5]{KhSh}).
The exterior gravitational potential of a uniform ellipsoid coincides with that of a non-uniform density supported on the two-dimensional ``focal ellipse'' of the ellipsoid.
This fact was used by Dritschel et al \cite{Drit} as a main step in developing a model for interaction of ``quasi-geostrophic'' meteorological vortices.
Actually, they didn't use the exact density of the mother body, 
but only the location of its support in order to choose a small number of point vortices that
generate a velocity field approximating that of an ellipsoid of uniform vorticity.
Determining the strength of the approximating point vortices is nothing more 
than interpolating the quadrature formula.
Our calculation at the end of Section \ref{sec:QD}, and similar calculations, 
could have promise for extending the model in \cite{Drit} to examples of non-ellipsoidal vortices.
An important missing ingredient here is a stability analysis, which has been carried out for ellipsoids.

{\bf 6.} Our intuition for Conjecture \ref{conj:quad} is based on two suspicions regarding the axially-symmetric case.
(1) According to the singularities of the four dimensional rotation of a limacon considered in Section \ref{sec:r4},
the quadrature formula involves point evaluations up to a second-order partial derivative.
On the basis of L. Karp's procedure described in Section \ref{sec:r4}, it seems that an axially-symmetric example involving only a point evaluation of the function and a first-order partial with respect to $x$ 
will have to be generated by a curve whose Schwarz function has an essential singularity at the origin.
Then, the conformal map would be transcendental.
(2) In $\mathbb{R}^3$ we expect the situation to be at least as bad.  
Following \cite[Ch.s 4 and 5]{Gar}, one can write an integral formula involving a Gauss hypergeometric function for the solution of a Cauchy problem for an $n$-dimensional
axially-symmetric potential.
The three dimensional case of the formula has the same form as the four-dimensional case, except the involved hypergeometric function is transcendental instead of rational.

{\bf Acknowledgement:}  I wish to thank the Los Alamos National Laboratory and Mark Mineev, 
who provided an inspiring introduction to this area during my visits.
I would also like to thank Dmitry Khavinson, Razvan Teodorescu, and Darren Crowdy for helpful discussions and suggestions.

\bibliographystyle{amsplain}

\end{document}